\date{}
\def\ps@pprintTitle{
  \let\@evenfoot\@oddfoot
}
\newtheorem{theorem}{Theorem}
\newtheorem{remark}{Remark}
\newtheorem{lemma}{Lemma}
\newtheorem{question}{Question}
\newcommand{\namelabel}[1]{%
  \phantomsection
  \renewcommand{\@currentlabel}{#1}
  \label{#1}
}
\begin{document}
\title{Simple lift of non-simple closed curves}

 \author{Deblina Das}
 \address{Department of Mathematics, Indian Institute of Technology Palakkad}
 \email{212114002@smail.iitpkd.ac.in}
 \author{Arpan Kabiraj}
 \address{Department of Mathematics, Indian Institute of Technology Palakkad}
\email{arpaninto@iitpkd.ac.in}
\begin{abstract}
Given a compact, oriented surface $S$ of finite genus and finitely many boundary components, we construct explicit examples of finite covers $\tilde{S}$ of $S$ and non-simple closed curves $\gamma$ on $S$ which lift to simple closed curves on $\tilde{S}$. In particular, given any positive integer $n\geq 2$, we construct explicit non-simple closed curves on $S$ that have simple lifts to a degree $n$ cover of $S$.
\end{abstract}
\maketitle
\section{Introduction}
Let $S$ be a connected, compact, oriented surface of finite genus and finitely many boundary components. We call such a surface a \emph{finite-type surface}. 
The \emph{self-intersection number} $i(\gamma,\gamma)$ of a free homotopy class of a closed curve $\gamma$ on $S$ is the minimal number of self-intersection points of the representative curves in the class of $\gamma$. Abusing notation, we identify a free homotopy class by its representatives. A closed curve $\gamma$ is said to be \emph{simple} if $i(\gamma, \gamma)=0$ and \emph{non-simple} if $i(\gamma,\gamma)\neq 0$. It is called \emph{primitive} if it is not the positive power of another curve.  A closed curve $\gamma$ is called \emph{essential} if it is not homotopic to a point or to any boundary component of $S$.  We call two finite covering spaces $S_1$ and $S_2$ of $S$ \emph{topologically equivalent} if $S_1$ and $S_2$ are homeomorphic as topological spaces (not necessarily as covering spaces). 

It is a celebrated theorem of Peter Scott \cite{scott1978subgroups}, \cite{scott1985correction} that each  non-simple closed
curve $\gamma$ lifts to a simple closed curve on some finite-sheeted cover (i.e. $\gamma$ ‘lifts simply'). In view of this result, it is natural to ask the following question:
\begin{question}\label{Q1}
    Given any surface $S$ and any finite cover $\tilde{S}$ of $S$, does there exist a non-simple primitive closed curve $\gamma$ on $S$ which admits a simple lift to $\tilde{S}$?
\end{question}

Question \ref{Q1} has an affirmative answer. This follows directly from  \cite[Theorem 3]{MR4887777}. It can also be deduced from \cite{MR2805069} via the following argument. 
Suppose the above result is false. Then the map from the curve complex of $\tilde{S}$ to the curve complex of $S$ induced by the covering map is coarsely defined and hence is a quasi-isometry.   
Now it follows from \cite[Theorem 1.2]{MR2805069}  that the covering map is a homeomorphism.  


However, both the proofs mentioned above are existential and rely on the geometry of the curve complex and  Teichm\"{u}ller space. 

This article is motivated by the search for an elementary, constructive, and purely topological proof of this result. 
The main result of this article is the following theorem.
\begin{theorem}\label{Main theorem}
    Let $S$ be a compact, connected, oriented, finite-type surface with Euler characteristic $\chi(\Sigma)$. For any positive integer $n\geq 2$, and for any surface $\tilde{S}$ with $\chi(\tilde\Sigma)=n\chi(\Sigma)$  we can explicitly construct a covering map $p:\tilde{S}\to S$ and a non-simple primitive closed curve $\gamma$ on $S$ such that $\gamma$ lifts simply to $\tilde{S}$. 
\end{theorem}
Although this does not answer Question \ref{Q1} in its full generality, the result provides a partial answer. Indeed, instead of proving the statement for a prescribed finite cover, we construct a finite cover that is topologically equivalent to the prescribed one. Our approach does not rely on the geometry of the curve complex or Teichmüller space, but instead gives explicit topological constructions of both the finite-sheeted covers and the non-simple curves that lift simply. We hope that this approach may lead to an affirmative answer to Question \ref{Q1} in full generality

 We denote the minimum degree of covering to which the curve $\alpha$ lifts simply  by $d(\alpha)$. Recently, there has been a considerable amount of research to quantify Scott’s result by obtaining bounds on $d(\alpha)$ in terms of topological and geometric properties of $\alpha$. See, \cite{patel2014theorem}, \cite{aougab2016building}, \cite{gaster2016lifting},\cite{MR3893306},\cite{MR4535842}, \cite{MR4056691} for details.

\subsection{Outline of the proof}
Our approach relies on the construction of finite-sheeted coverings of surfaces with boundaries given in \cite{massey1974finite}. We recall the relationship among the Euler characteristic $\chi$, the genus $g$ and the number of boundary components $k$ of the compact surface $S$:\begin{equation}\label{equation 1} 
    \chi=2-2g-k.
\end{equation} 
 A given $n$-sheeted covering $\tilde{S}$ of $S$ must satisfy the following conditions. If Euler characteristics of $S$ and $\tilde{S}$ be $\chi$ and $\tilde{\chi}$ respectively, then \begin{equation}\label{equation 2}
   \tilde{\chi}=n\chi.
\end{equation}
If we denote boundary components of $S$ and $\tilde{S}$ by $k$ and $\tilde{k}$ respectively, then
\begin{equation}\label{equation 3}
    k\leq \tilde{k}\leq nk.
\end{equation}
We recall that any covering of an oriented surface is oriented and any finite-sheeted covering of a compact surface is  compact. Therefore (up to topological equivalence) finite-sheeted covers are completely determined by any two of the following three numbers $(a)$ genus, $(b)$ number of boundary components and $(c)$ Euler characteristics. The main theorem in \cite{massey1974finite} gives a construction of $n$-sheeted coverings $\tilde{S_1}$ of $S$ satisfying the conditions given in the equation (\ref{equation 1}), equation (\ref{equation 2}) and equation (\ref{equation 3}) above by fixing Euler characteristics and for all possible boundary components of the coverings or by fixing Euler characteristics and for all possible genus of the coverings. Now,  given any $n$-sheeted covering $\tilde S$ of $S$, it  must satisfy the equation (\ref{equation 1}), equation (\ref{equation 2}) and equation (\ref{equation 3}). Therefore, $\tilde{S}$ and one of the coverings $\tilde{S_1}$ both have the same Euler characteristics and the same number of boundary components. Due to the classification theorem, up to homeomorphism, a compact surface with $k\ (\geq 1)$ boundary components is a surface of genus $g\ (\geq 0)$ with $k$ boundaries. Hence $\tilde{S}$ and one of the coverings $\tilde{S_1}$ are homeomorphic, i.e., topologically equivalent. Therefore, without loss of generality, we prove the Theorem \ref{Main theorem} for surfaces with boundaries considering only the particular $n$-sheeted coverings $\tilde{S_1}$ constructed in \cite{massey1974finite}. 

The proof of Theorem \ref{Main theorem} proceeds by analyzing the possible cases according to the genus and the number of boundary components as outlined in Section \ref{Pair of pants} - Section \ref{Surface of genus greater equal to 1  boundary components greater equal to 3}. Finally, in Section \ref{Closed orientable surfaces}, we discuss the proof for compact surfaces without boundaries. Throughout this article, we denote the surfaces of genus $g$ with $k$ boundary components as $S_{g,k}$ and consider only those surfaces with negative Euler characteristics unless otherwise specified.

\section*{Acknowledgements}
We would like to thank Macarena Arenas for pointing out the reference \cite{MR4056691}. The second author acknowledges the support from DST SERB Grant No.: SRG/2022/001210 and  DST SERB Grant No.: MTR/2022/000327.

\section{Surfaces with non-empty boundary }
\subsection{Pair of pants.}\label{Pair of pants}
Let $P$ be a pair of pants, i.e., a surface of genus $0$ and $3$ boundary components. An $n$-sheeted covering of $P$ is denoted by $P^{[n]}$ where $n\in \mathbb{N}_{\geq 2}$.
We identify the fundamental group of $P$, $\pi_1(P,v)$ as a rank $2$ free group $F_2$ with generators $a$ and $b$. Recall that a \emph{ribbon graph} also called \emph{fat graph} is a graph equipped with a cyclic ordering on the half edges incident to each vertex.
We choose orientations for $a$ and $b$ and draw the pair of pants $P$ as a directed ribbon graph with one vertex $v$ and two directed edges $a$ and $b$ as in Figure \ref{Pair of pants $P$ with generators}. 
\begin{figure}[ht]
    \centering
     \includegraphics[width=6cm, height=4cm]{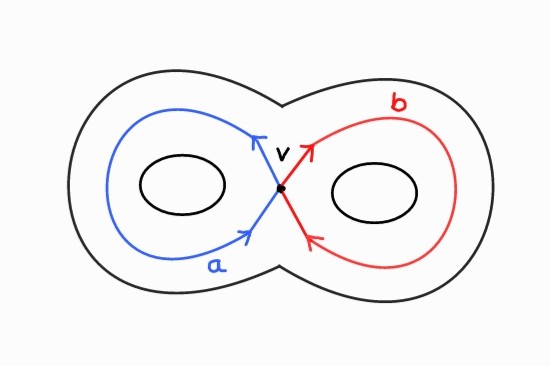}
    \caption{Pair of pants $P$ with generators $a$ and $b$.}
    \label{Pair of pants $P$ with generators}
\end{figure}

  If the number of boundary components of $P^{[n]}$ is $\tilde{k}$, then the following condition is satisfied (see \cite[Section 1]{massey1974finite}): \[3\leq \tilde{k}\leq n+2, \qquad \tilde{k}\equiv n\pmod 2.\]
 Also, all values of $\tilde{k}$ satisfying this condition are realized by some $n$-sheeted covering of $P$. A construction is given in \cite[Section 2]{massey1974finite} which we recall below. 
 
 For each integer $m$ depending on $\tilde{k}$ and $0\leq m \leq n-2$,  an explicit $n$-sheeted covering of $P$ is given by $p: P^{[n]}_m \to P$ as follows. We draw $P_m^{[n]}$ as directed ribbon graph with $n$ vertices and $2n$ directed edges, $n$ with $a$ labels and $n$ with $b$ labels. These coverings satisfy the following conditions:
 
 \begin{itemize}
     \item The inverse image $p^{-1}(b) $ is connected.
     \item The inverse image $p^{-1}(a) $ has $m+1$ components. Among them, $m$ components map homeomorphically onto $a$ by $p$ and the remaining one component is $(n-m)$-sheeted covering of circle $a$. Moreover, the order in which the vertices  $P^{[n]}_m$ occur on this $(n-m)$-sheeted covering matches the order in which these vertices occur around the preimage $p^{-1}(b)$.
 \end{itemize}  
 We fix the order of the vertices on $P^{[n]}_m$ by $v_0,v_1,v_2,\ldots,v_m,v_{m+1},\ldots,v_{n-1}$ which are preimages of $v$ in this covering space such that $v_1,v_2,\ldots, v_m$ (provided $m\geq 1$) are the vertices where directed edge of label $a$ occurs as homeomorphic preimage of $a$.
 \begin{lemma}\label{lemma}
     With the above notation, for this fixed ordering of the vertices, there exists one directed edge with label $a$ on $P^{[n]}_m$ starting at $v_{m+1}$ and ending at $v_{0}$ where $m\geq 0$.
 \end{lemma}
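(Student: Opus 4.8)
The plan is to read the claimed edge directly off the explicit construction of $P^{[n]}_m$ that has just been recalled, so that the proof amounts to careful bookkeeping of the directed $a$-edges together with a single orientation check. First I would isolate the two structural facts already built into the construction. The preimage $p^{-1}(b)$ is a single circle covering $b$ with degree $n$, and it is precisely this circle that fixes the cyclic ordering $v_0, v_1, \ldots, v_{n-1}$ of the vertices; by the stated convention the loop-vertices $v_1, \ldots, v_m$ (those at which $a$ lifts homeomorphically) occupy the positions between $v_0$ and $v_{m+1}$. The preimage $p^{-1}(a)$ splits as the $m$ loops at $v_1, \ldots, v_m$ together with one circle $C$ that is a degree-$(n-m)$ cover of $a$ passing through exactly the remaining vertices $v_0, v_{m+1}, v_{m+2}, \ldots, v_{n-1}$.

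Next I would invoke the \emph{moreover} clause of the construction. Deleting the loop-vertices $v_1, \ldots, v_m$ from the cyclic order carried by $p^{-1}(b)$ leaves the non-loop vertices in the cyclic order $v_0, v_{m+1}, v_{m+2}, \ldots, v_{n-1}$. By hypothesis the order in which these vertices are encountered along $C$ agrees with this cyclic order, so $C$ visits the non-loop vertices cyclically and its directed $a$-edges join cyclically consecutive vertices. In particular $v_{m+1}$ and $v_0$ are adjacent in the list, the intervening indices $1, \ldots, m$ having been removed, and are therefore joined by a single directed $a$-edge of $C$. This already settles the existence and the endpoints of the edge; it remains only to determine which way it points.

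Finally I would pin down the direction, which is the one genuinely delicate point. The combinatorial fact that $v_{m+1}$ and $v_0$ are $C$-adjacent is immediate and uniform in $m$, but whether the edge runs $v_{m+1} \to v_0$ or $v_0 \to v_{m+1}$ depends on matching the orientation of $a$ along $C$ against the orientation recorded in the ribbon graph of $P^{[n]}_m$. I would make this explicit by reading the edge out of the ribbon-graph picture, equivalently by tracking the monodromy permutation $\sigma_a$ induced by $a$ on the vertices and verifying $\sigma_a(v_{m+1}) = v_0$, so that the edge in question is exactly the one of $C$ that closes the cycle back to the base vertex $v_0$. The main obstacle is thus not the location of the edge but confirming its direction against the orientation conventions; once those conventions are fixed this is a short check, and I would record the boundary cases $m = 0$ and $m = n-2$ separately as a sanity check, since for $m = n-2$ the circle $C$ is a $2$-cycle and the direction is forced regardless of convention.
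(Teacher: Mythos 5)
Your proposal is correct and is essentially the paper's own argument: the paper likewise reads the edge off the \emph{moreover} (order-matching) clause of the construction, phrasing it as an elimination — the outgoing $a$-edge at $v_{m+1}$ cannot end at the loop vertices $v_1,\dots,v_m$, nor at $v_{m+1}$ itself, nor at any of $v_{m+2},\dots,v_{n-1}$ (as that would violate the ordering condition), hence it ends at $v_0$. Your cyclic-adjacency formulation followed by the orientation check is the same argument organized positively rather than by elimination; if anything, you are more explicit than the paper about the one delicate point, namely that the direction $v_{m+1}\to v_0$ (rather than $v_0\to v_{m+1}$) rests on the orientation conventions fixed in the construction and its figures, a point the paper absorbs silently into its elimination step.
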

 \begin{proof}
 By the construction of the covering, there is an edge labeled $a$ that starts at $v_{m+1}$ and ends at $v_{k}$ for some $k\geq 0$. We observe that $k\neq 1,2, \ldots ,m,m+1$, since \{$v_1,v_2,\ldots, v_m$\} is the set of vertices where a loop labeled $a$ occurs as homeomorphic preimage of $a$. If $k\in \{m+2,m+3, \ldots,n-1\}$ then the order in which the vertices of $P^{[n]}_m$ occur on this $(n-m)$-sheeted covering does not match with the order of the vertices around the preimage $p^{-1}(b)$. So the second property of the covering construction is violated. Therefore, $k=0$. This completes the proof. \end{proof}
                      
 Let, $\gamma^k$ be the closed curve on $P$ given by the class $ab^k$ where $k\in \mathbb{N}$. We observe that $i(\gamma^k, \gamma^k)=k$ for all $ k \in \mathbb{N}$ (see, \cite{gaster2016lifting}) and therefore $\{\gamma^k\}$ is a collection of non-simple closed curves on $P$ (e.g., Figure \ref{Representative of gamma_3 gamma_1}). A lift of $\gamma^{k}$ is given by one directed edge labeled $a$ followed by $k$ consecutive directed edges labeled $b$.
 
\begin{figure}[ht]
    \centering
   \subfigure[A minimal position representative of $\gamma^1$.]{\includegraphics[width=6cm, height=4cm]{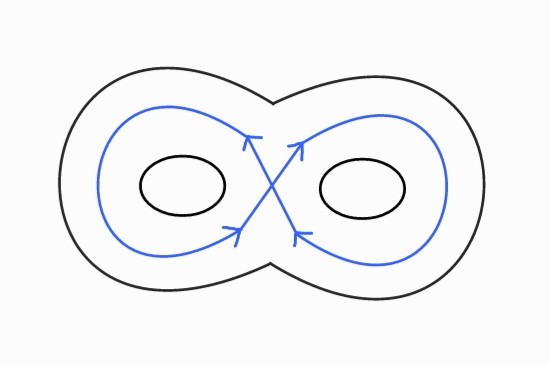}} \hfill 
   \subfigure[A minimal position representative of $\gamma^3$.]{\includegraphics[width=6cm, height=4cm]{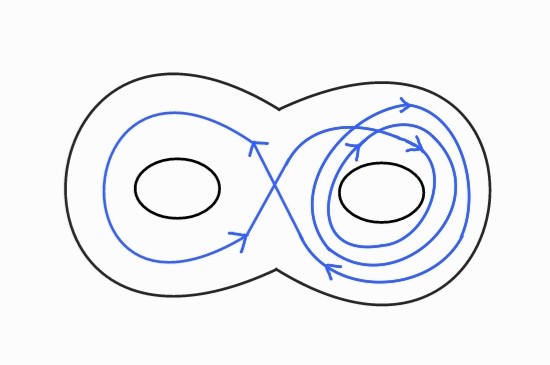}} 
    \caption{Non-simple closed curves $\gamma^k \equiv ab^k$ on $P$.}
  \label{Representative of gamma_3 gamma_1} 
\end{figure}

For each integer $m$ such that $0\leq m \leq n-2$, we choose non-simple closed curves $\gamma^{m+1}$ on $P$. Due to the construction of the covering space and Lemma \ref{lemma} we observe that a lift of $\gamma^{m+1}$ is given by one labeled $a$ edge which starts at $v_{m+1}$ and ends $v_{0}$; then followed by $m+1$ consecutive labeled $b$ edges joining $v_0$ to $v_1$, $v_1$ to $v_2$, $\ldots$, $v_m$ to $v_{m+1}$. Hence, $\gamma^{m+1}$ lifts simply on $P_m^{[n]}$ by $p$.
All the $6$-sheeted covering spaces of $P$ arising from this construction and simple lifts of $\gamma^{m+1}$ are depicted in Figure \ref{Scheme of covering spaces of P for n=6 }.

\begin{figure}[h]
    \centering
\subfigure[Covering space of $P$ in case of $m=0$ and a simple lift of $\gamma^1\equiv ab^1$ in this covering.] 
{{\includegraphics[width=6cm, height=6cm]{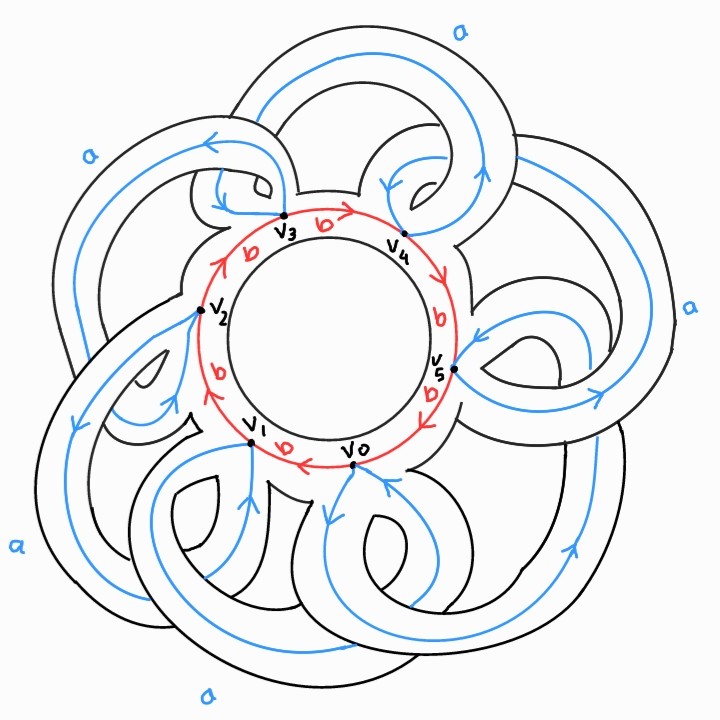}}\hspace{1cm}
{\includegraphics[width=6cm, height=6cm]{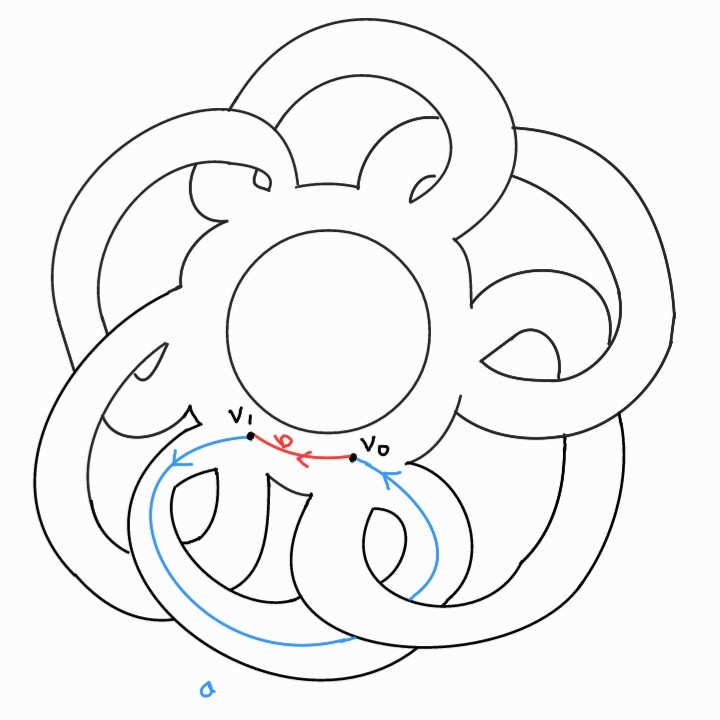}}}
\hfill
\end{figure}
\begin{figure}[h]
    \centering
     \subfigure[Covering space of $P$ in case of $m=1$ and a simple lift of $\gamma^2\equiv ab^2$ in this covering.]{{\includegraphics[width=6cm, height=6cm]{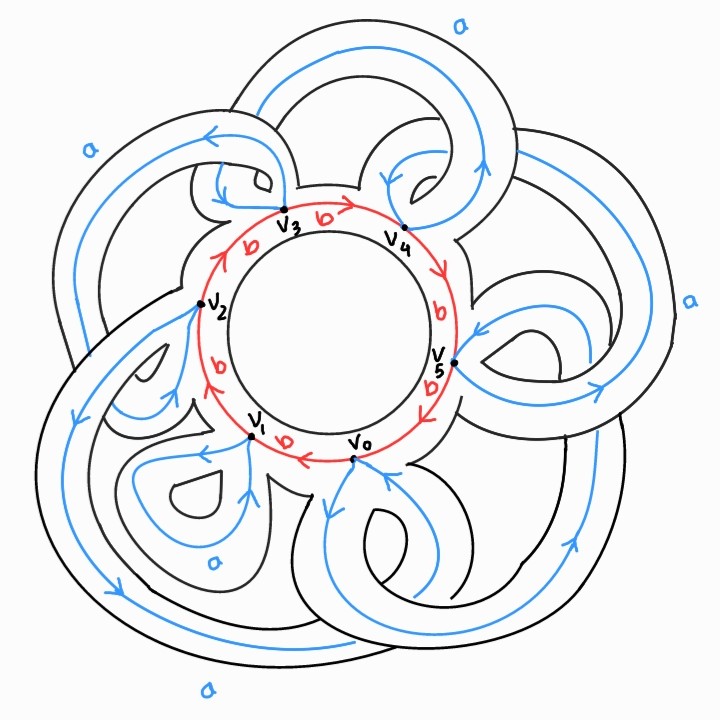}} \hspace{1cm}
   {\includegraphics[width=6cm, height=6cm]{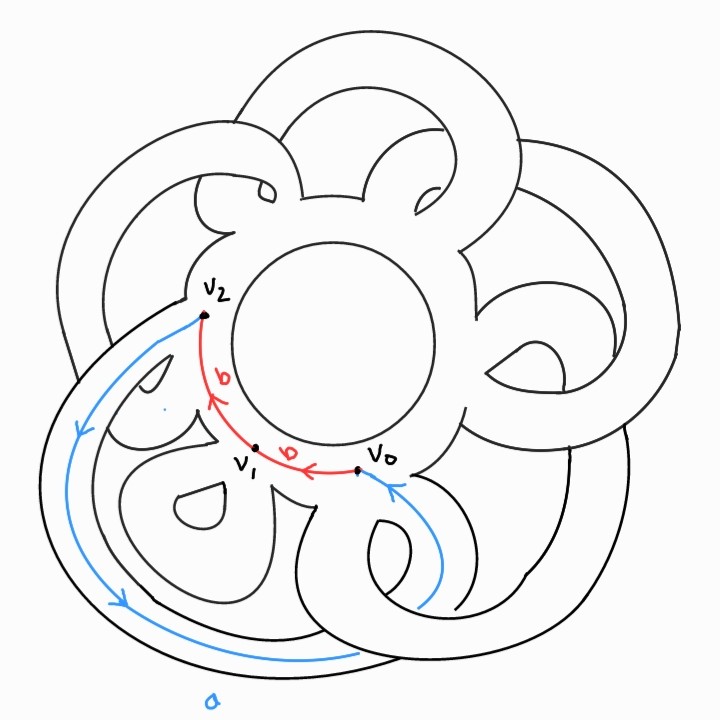}}} \hfill
   \end{figure} 
   \begin{figure}[H]
   \centering
 \subfigure[Covering space of $P$ in case of $m=2$ and a simple lift of $\gamma^3\equiv ab^3$ in this covering.]{{\includegraphics[width=6cm, height=6cm]{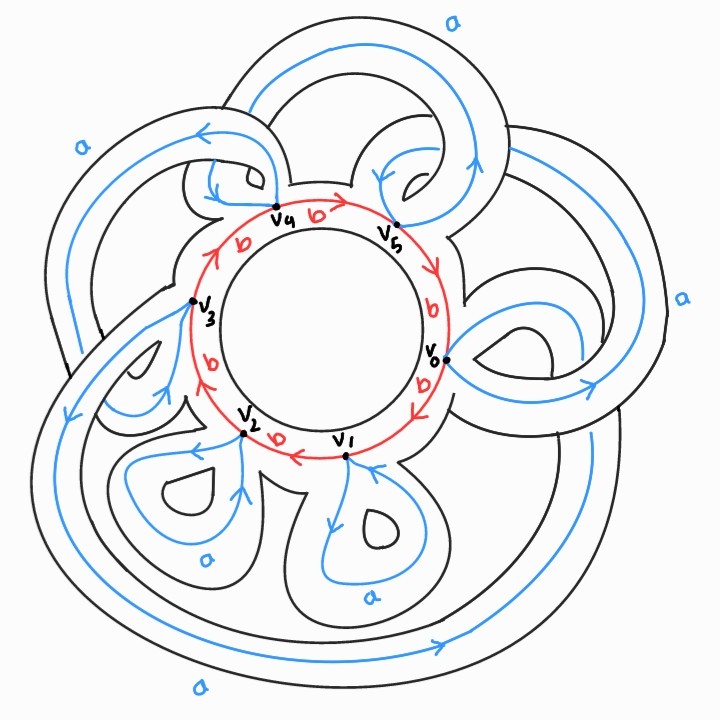}} \hspace{1cm}
   {\includegraphics[width=6cm, height=6cm]{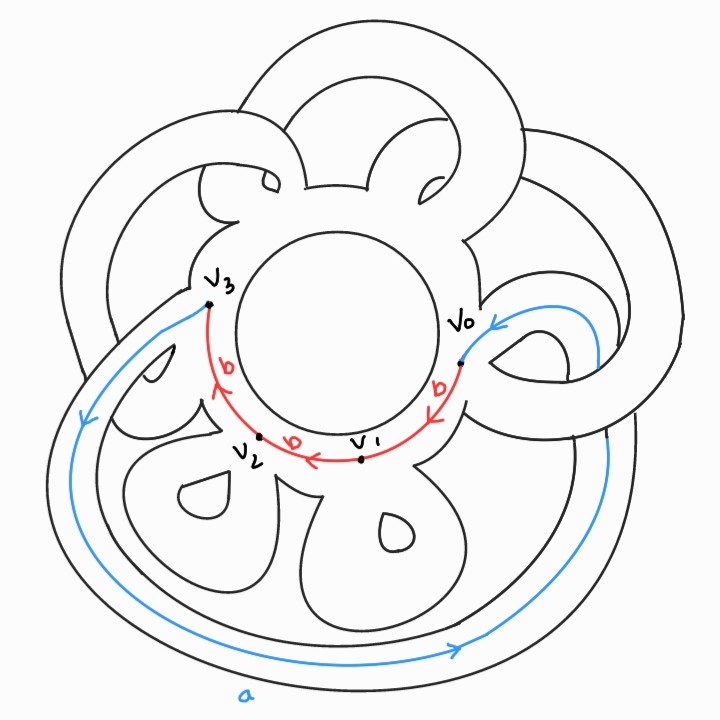}}}\hfill
   \end{figure}
 \begin{figure}[ht]
   \centering
    \subfigure[Covering space of $P$ in case of $m=3$ and a simple lift of $\gamma^4\equiv ab^4$ in this covering.]{{\includegraphics[width=6cm, height=6cm]{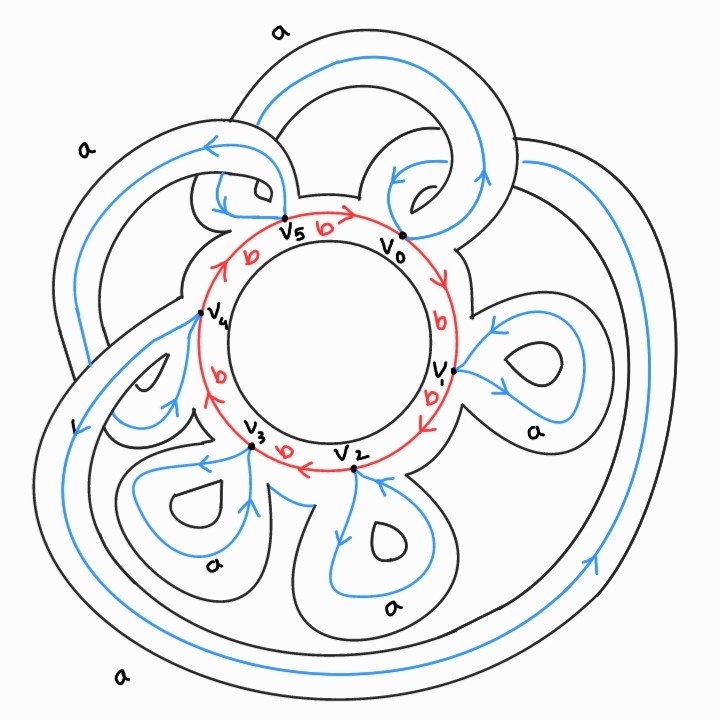}} \hspace{1cm}
   {\includegraphics[width=6cm, height=6cm]{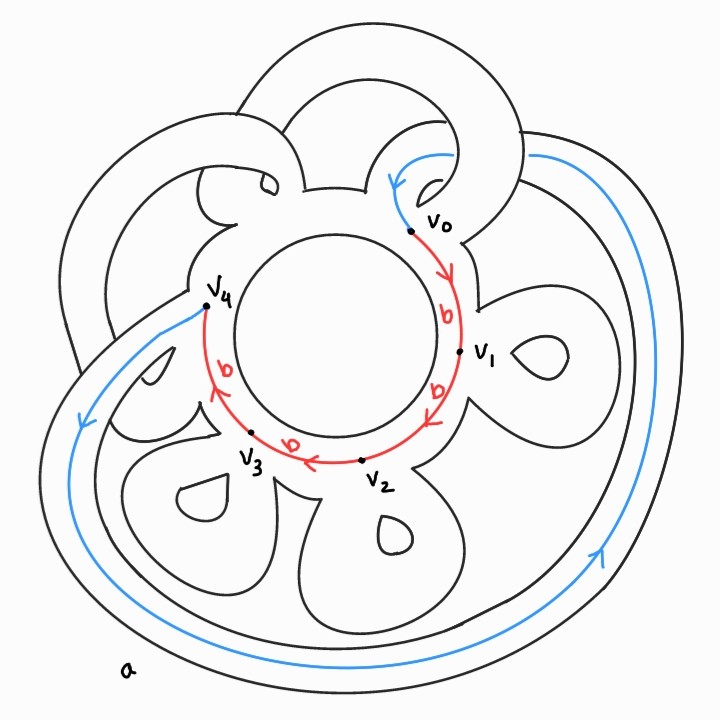}}}\hfill   
\end{figure}
\begin{figure}[H]
    \centering
\subfigure[Covering space of $P$ in case of $m=4$ and a simple lift of $\gamma^5\equiv ab^5$ in this covering.] 
{{\includegraphics[width=6cm, height=6cm]{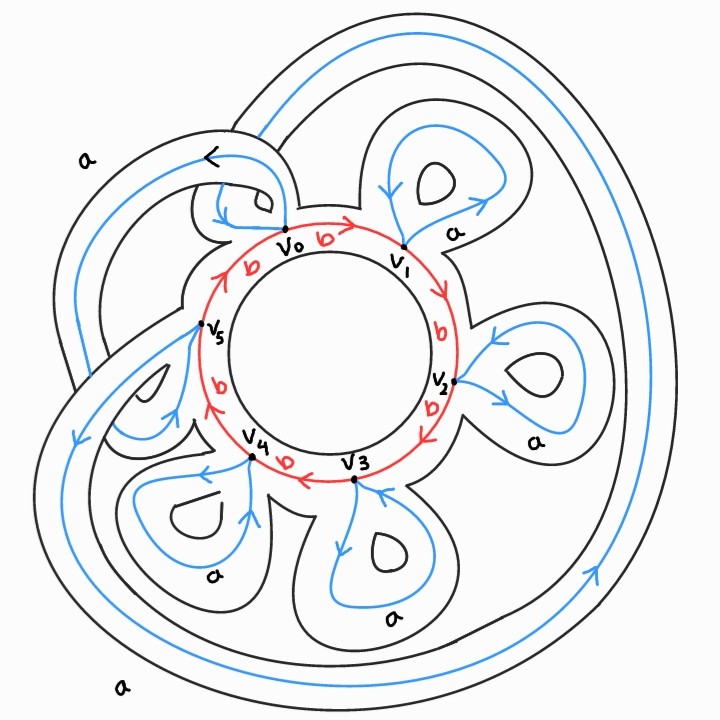}}\hspace{1cm}
{\includegraphics[width=6cm, height=6cm]{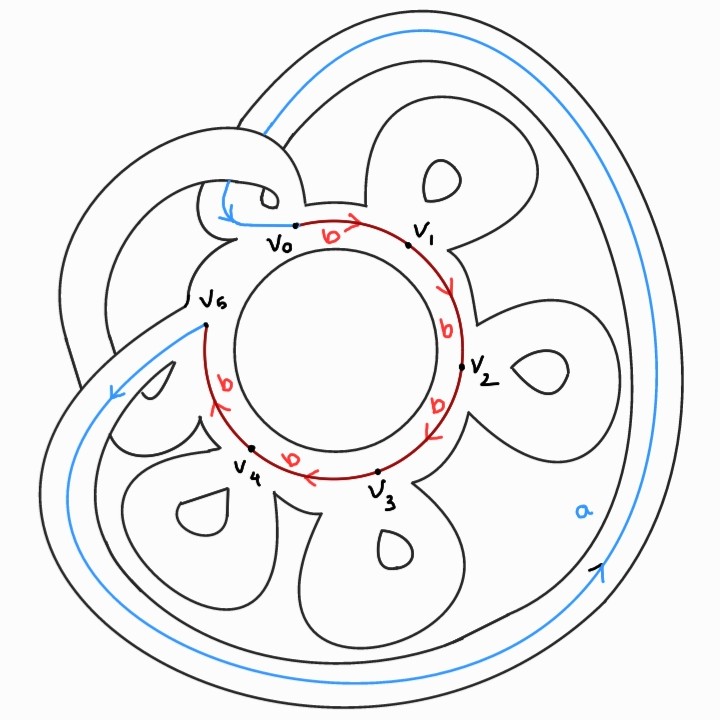}}}
\caption{Scheme of $6$-sheeted covering spaces of $P$ and simple lifts of $\gamma^{m+1}$ where $0\leq m\leq 4$.}
\label{Scheme of covering spaces of P for n=6 }
\end{figure}

\begin{remark}
   For given two topologically non-equivalent $n$-sheeted covers $P_1$ and $P_2$ of $P$, we can always construct a non-simple closed curve on $P$ which lift to $P_1'$ simply but not to $P_2'$ where $P_1'$ and $P_2'$ are  $n$-sheeted covers of $P$ such that $P_1'$ is topologically equivalent to $P_1$; $P_2'$ is topologically equivalent to $P_2$.
\end{remark}

\subsection{Surface of genus $g=0$ and boundary components $k\geq 3$.}\label{Surface of genus g=0 and boundary components k greater equal to 3}
We denote a surface of genus $g=0$ and $k$ boundary components by $S_{0,k}$ and its $n$-sheeted covering by $S_{0,k}^{[n]}$. In this case the number of boundary components  $\tilde{k}$ of an $n$-sheeted covering space satisfies the conditions (see \cite[Section 4]{massey1974finite}):
\[k\leq \tilde{k} \leq (k-2)n+2, \qquad \tilde{k} \equiv nk \pmod2 .\]
Also, any integer $\tilde{k}$ satisfying these conditions can be realized by some $n$-sheeted covering of $S_{0,k}$ as described in \cite[Section 4]{massey1974finite}, which we recall below.
We consider two cases to construct non-simple closed curves that lift simply on $S_{0,k}^{[n]}$. Note that the construction of covering $S_{0,k}^{[n]}$ is depending on $\tilde{k}$, but for simplicity we avoid include it in the notation of $S_{0,k}^{[n]}$.

\textit{Case 1:}\phantomsection\label{Case 1,S_0,1} Suppose the number of boundary components of $S_{0,k}$ is even. Let $A_1,A_2,\ldots ,A_k$ denote the boundary components of $S_{0,k}$. We make disjoint cuts $c_1,c_2,\ldots, c_{\frac{k}{2}}$ on $S_{0,k}$ such that $c_1$ joins $A_1$ and $A_2$, $c_2$ joins $A_3$ and $A_4$ and so on. The method is shown in Figure \ref{Surface S_{0,k}} in the case of $k=6$.
\begin{figure}[ht]
    \centering
     \includegraphics[width=7cm, height=5cm]{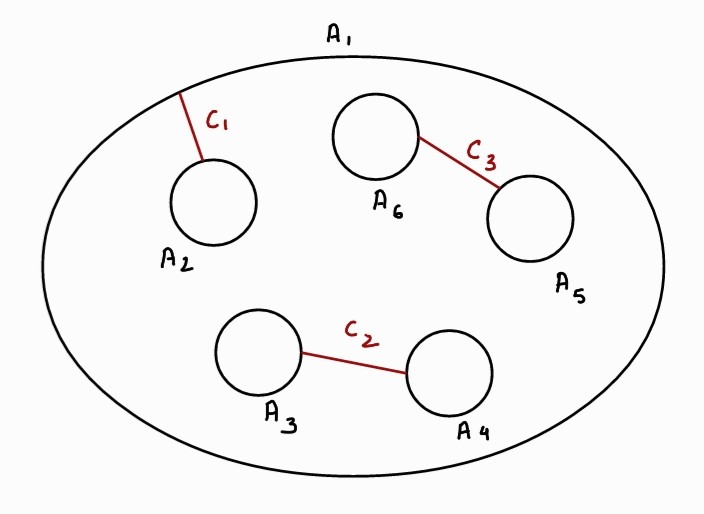}
    \caption{Surface $S_{0,6}$ with boundary components joined by cuts.}
    \label{Surface S_{0,k}}
\end{figure}

To construct a $n$-sheeted covering space, we take such $n$ copies of $S_{0,k}$ with indicated cuts and paste the edges of the cuts in different copies of $S_{0,k}$: we paste along the $c_1$ cut so that $p^{-1}(A_1)$ and $p^{-1}(A_2)$ are connected and ensure that $S_{0,k}^{[n]}$ is connected. The remaining cuts may be pasted together so as to get either one of the extreme conditions:
\begin{enumerate}
    \item The inverse images $p^{-1}(A_3)$, $p^{-1}(A_4)$, $\ldots$, $p^{-1}(A_k)$ each have $n $ components. In this case $\tilde{k}=(k-2)n+2$.
    \item The inverse images $p^{-1}(A_3)$, $p^{-1}(A_4)$, $\ldots$, $p^{-1}(A_k)$ are all connected. In this case $\tilde{k}=k$.

\end{enumerate}
Any intermediate values of $\tilde{k}$ between these extremes can be achieved by pasting cuts differently.
Let $p:S_{0,k}^{[n]}\to S_{0,k}$ be the $n$-sheeted coverings obtained by this construction.
We recall, the fundamental group of $S_{0,k}$ is a free group ${F}_{k-1}$ with $k-1$ generators. The generators around $A_2, A_3,\ldots A_k$ are denoted by $a_2, a_3, \ldots, a_k$  respectively. Without loss of generality, we choose the orientation of all generators in an anti-clockwise direction. A set of chosen generators with orientation for $S_{0,6}$ are shown in Figure \ref{tau in terms of generators}(a). Let, $\tau^j$ be the closed curve on $S_{0,k}$ given by the homotopy class $a_2a_3\ldots a_ka_2^{j}$ where $j\in \mathbb{N}$. 
We observe that $i(\tau^j, \tau^j)=j$ (using bigon criteria) for all $j\in \mathbb{N}$ and hence $\{\tau^j\}$ is a collection of non-simple closed curves on $S_{0,k}$.

\begin{figure}[ht]
    \centering
   \subfigure[Generators of $S_{0,6}$ with chosen orientation.]{\includegraphics[width=7cm, height=5cm]{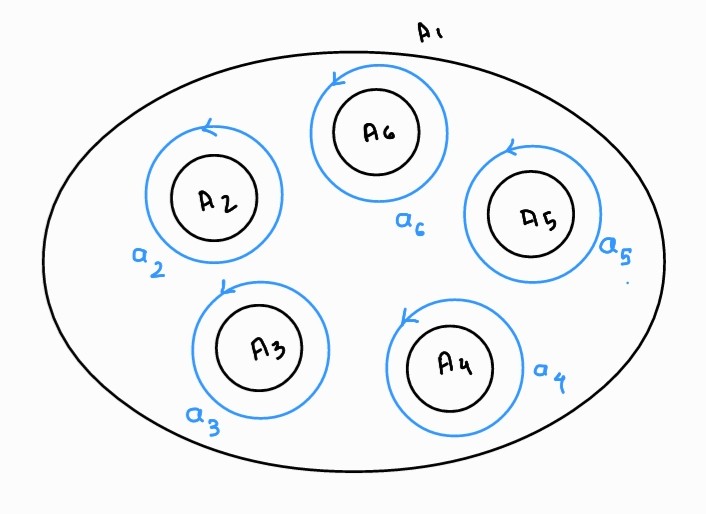}} \hfill 
   \subfigure[A minimal position representative of $\tau^2$ on $S_{0,6}$.]{\includegraphics[width=7cm, height=5cm]{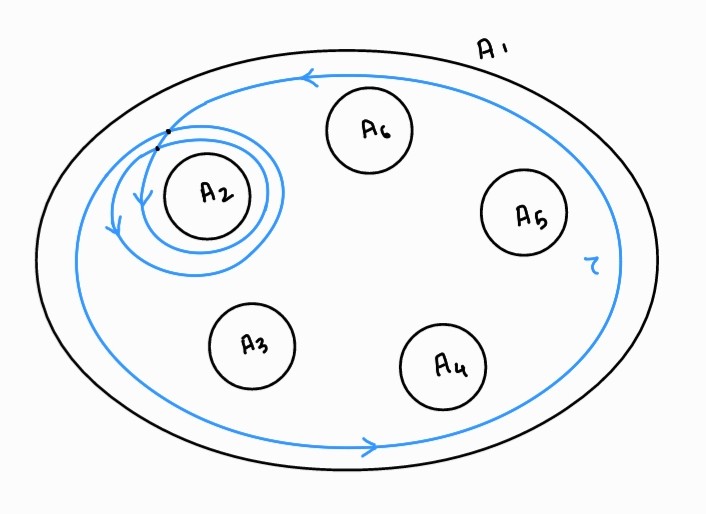}} 
    \caption{}
  \label{tau in terms of generators} 
\end{figure}
\begin{figure}[ht]
    \centering
     \includegraphics[width=8cm, height=6cm]{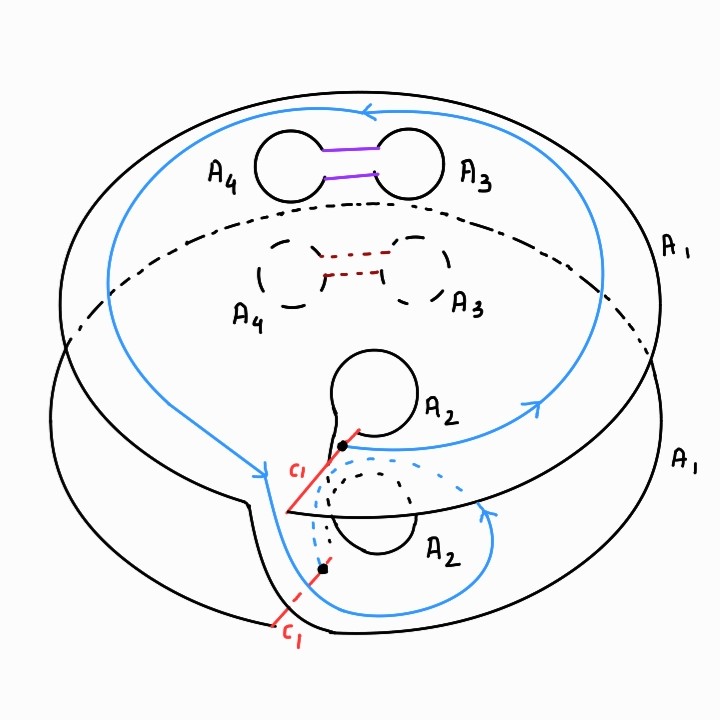}
    \caption{Lift of non simple closed curve $\tau^1 \equiv a_2a_3a_4a_2$ on $S_{0,4}^{[2]}$.}
    \label{Lift of tau in 2 sheeted covering of S_0,4}
\end{figure}

  In this case, we choose $\tau^{n-1}$ on $S_{0,k}$ for all possibilities of $\tilde{k}$ (e.g. Figure \ref{tau in terms of generators}$(b)$ shows the chosen non-simple closed curve in the case of $n=3$ on $S_{0,6}$). It follows directly from the construction of $p$ that these curves lift simply on $S_{0,k}^{[n]}$. For example, in Figure \ref{Lift of tau in 2 sheeted covering of S_0,4}, a simple lift of $\tau^1$ is shown on $2$-sheeted covering $S_{0,4}^{[2]}$. Here the red cut lines $c_1$ are identified where two points `$\bullet$' are identified on both sheets. No matter how the rest of the cuts are identified maintaining the scheme described, the indicated blue curve gives a simple lift of $\tau^1$ on $S_{0,4}^{[2]}$. This concludes Case 1. 
  
  Before going to the next case, we state the following well known lemma.
  \begin{lemma}\label{Essential closed curve}
    Any lift of an essential closed curve on a surface $S$ is again essential.
\end{lemma}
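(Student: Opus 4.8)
The plan is to argue by contraposition: rather than showing directly that an essential curve lifts to an essential curve, I will show that if a lift $\tilde\gamma$ of $\gamma$ fails to be essential in $\tilde S$, then $\gamma$ itself fails to be essential in $S$. The two ingredients I will use are standard facts about the covering map $p\colon\tilde S\to S$. First, $p$ restricts to a covering of the boundaries, so that $p(\partial\tilde S)=\partial S$ and every boundary component $\tilde\delta$ of $\tilde S$ is carried by $p$ onto a boundary component $\delta$ of $S$ via a finite covering $p|_{\tilde\delta}\colon\tilde\delta\to\delta$ of circles. Second, post-composing a free homotopy in $\tilde S$ with $p$ produces a free homotopy in $S$.

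So suppose $\tilde\gamma$ is not essential, i.e. $\tilde\gamma$ is freely homotopic to some boundary component $\tilde\delta$ of $\tilde S$. Let $H\colon S^1\times[0,1]\to\tilde S$ be such a free homotopy, with $H_0=\tilde\gamma$ and $H_1=\tilde\delta$. Then $p\circ H$ is a free homotopy in $S$ from $p\circ\tilde\gamma$ to $p\circ\tilde\delta$. Since $\tilde\gamma$ is a lift of $\gamma$ we have $p\circ\tilde\gamma=\gamma$, while the image of $p\circ\tilde\delta$ is contained in $\delta\subset\partial S$. Hence $\gamma$ is freely homotopic to a curve lying in the boundary of $S$, so $\gamma$ is not essential. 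Taking the contrapositive yields the lemma. Non-triviality of $\tilde\gamma$ requires no separate argument: a null-homotopy of $\tilde\gamma$ would project under $p$ to a null-homotopy of $\gamma$, again contradicting essentiality of $\gamma$.

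The one point that needs care --- and the only real obstacle --- is the precise meaning of ``homotopic to a boundary component.'' Because $p|_{\tilde\delta}$ may have degree $d>1$, the projected curve $p\circ\tilde\delta$ wraps $d$ times around $\delta$, so strictly it represents the class $\delta^d$ rather than $\delta$ itself. I will therefore read essentiality in the standard topological sense, namely that an essential curve is one that cannot be freely homotoped into $\partial S$ (equivalently, is neither null-homotopic nor freely homotopic to a power of a boundary component). With this reading the argument above is complete, since $p\circ\tilde\delta$ visibly lies in $\partial S$ regardless of the covering degree. In the situation relevant to the remainder of the paper, where the lift $\tilde\gamma$ is a \emph{simple} closed curve, a peripheral simple curve is isotopic to a boundary component traversed once, so in fact $d=1$ and $\gamma$ is genuinely homotopic to the single boundary component $\delta=p(\tilde\delta)$; thus the two readings agree in the cases we apply the lemma.
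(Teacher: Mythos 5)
Your core argument is correct and is essentially the paper's argument: suppose the lift is peripheral, push the free homotopy down through $p$, and conclude that $\gamma$ is peripheral. Where you go beyond the paper is the degree issue, and your instinct there is sound. The step the paper justifies by properness (``$p\circ\sigma$ is homotopic to a boundary component of $S$'') is exactly the imprecise one: if $p$ restricted to the boundary circle $\sigma$ of $\tilde{S}$ has degree $d$ over the boundary component $\delta$ of $S$, then $p\circ\sigma$ represents $\delta^d$, not $\delta$. Indeed, under the paper's literal definition of essential (``not homotopic to any boundary component''), the lemma is false: on a pair of pants with $\pi_1=\langle a,b\rangle$ and boundary classes $a$, $b$, $(ab)^{-1}$, the curve $\gamma=a^2$ is not freely homotopic to any boundary component, yet in the degree-two cover corresponding to the kernel of the map $a\mapsto 1$, $b\mapsto 0$ to $\mathbb{Z}/2\mathbb{Z}$, the preimage of the boundary $a$ is a single boundary circle of the cover which, traversed once, is a lift of $\gamma$. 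So your decision to read ``essential'' as ``not freely homotopable into $\partial S$'' is not pedantry; it is what makes the statement true, and with that reading your proof is complete.

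However, your closing reconciliation is wrong: simplicity of the lift does not force $d=1$. The same example refutes it --- the lift of $a^2$ there is a boundary circle of the cover, hence simple, and it covers the boundary component $a$ with degree $2$. Simplicity controls how $\tilde\gamma$ sits inside $\tilde S$ (it is isotopic to a boundary component of $\tilde S$ traversed once) but says nothing about the degree of $p$ on that boundary component. So the two readings of ``essential'' genuinely differ even for simple lifts, and the lemma should be stated and applied throughout with the ``homotopic into the boundary'' meaning; since the curves the paper feeds into this lemma are not homotopic to any power of a boundary component, its applications are unaffected.
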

\begin{proof}[Proof]
    Let $p: \tilde{S} \to S $ be a covering projection and $\tilde{\gamma}$ on $\tilde{S}$ is a lift of an essential closed curve $\gamma$ on $S$. If possible, $\tilde{\gamma}$ is non-essential. Then there is a boundary component of $\tilde{S}$, say $\sigma$ such that
\[ \tilde{\gamma} \sim \sigma \implies p^{-1}(\gamma )\sim \sigma \implies p(p^{-1}(\gamma))\sim p \circ \sigma  \implies \gamma \sim p\circ \sigma.\]
Since $p$ is a local homeomorphism, $p\circ\sigma$ is homotopic to a boundary component of $S$, i.e., $\gamma$ is homotopic to a boundary of $S$. This gives a contradiction to the fact that $\gamma$ is essential.
\end{proof}
\textit{Case 2:}\phantomsection\label{Case 2, S_0,k} Suppose the number of boundary components of $S_{0,k}$ is odd where $k>3$ (case: $k=3$ is discussed in Section \ref{Pair of pants}). Then $S_{0,k}$ can be represented as the union of two surfaces $N_1$ and $N_2$ where $N_1$ is a surface of genus $0$ and $k-1$ boundary components whereas $N_2$ is a surface of genus $0$ and $3$ boundary components. Also, $N_1 \cap N_2$ is a circle which is a boundary component of both $N_1$ and $N_2$. For example, a representation of  $S_{0,7}$ is depicted in Figure \ref{Union of N_1 and N_2}.

\begin{figure}[ht]
    \centering
     \includegraphics[width=7cm, height=5cm]{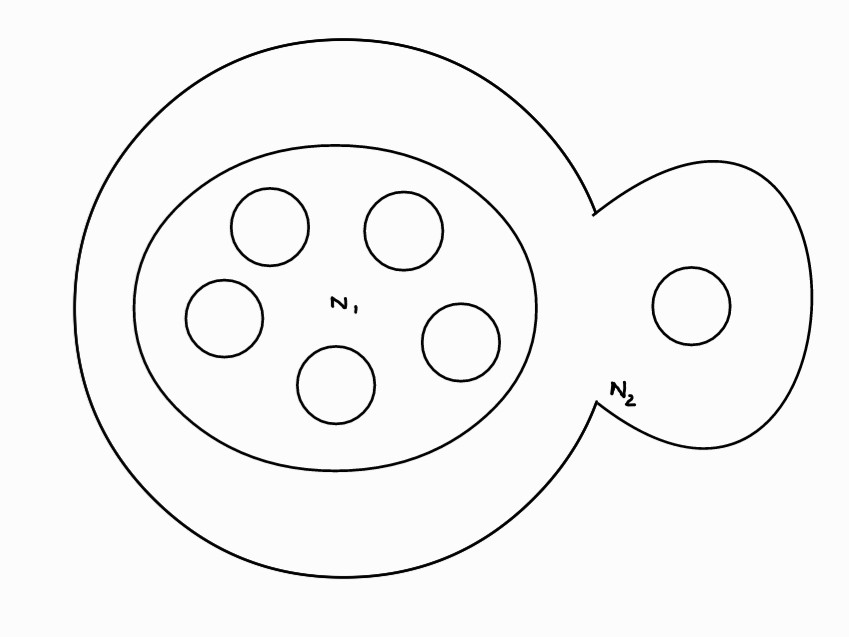}
    \caption{Decomposition of $S_{0,7}$ as union of two surfaces $N_1$ and $N_2$.}
    \label{Union of N_1 and N_2}
\end{figure}
 
 We fix a representation $\{N_1,N_2\}$ of $S_{0,k}$. Now we construct $n$-sheeted covering $p:S_{0,k}^{[n]}\to S_{0,k}$ by following the construction given in \cite[Section 4]{massey1974finite}. At first, we construct $n$-sheeted covering $p_1:\tilde{N_1}\to N_1$ by the method described in \textit{Case 1} (Section \ref{Case 1,S_0,1}) such that $p_1^{-1}(N_1 \cap N_2)$ is connected and construct $n$-sheeted covering $p_2: \tilde{N_2}\to N_2$ by the method described in Section \ref{Pair of pants} such that $p_2^{-1}(N_1 \cap N_2)$ is connected. Then $ S_{0,k}^{[n]}$ is quotient space of $\tilde{N_1}\cup \tilde{N_2}$ identifying $p_2^{-1}(N_1 \cap N_2)$ and $p_1^{-1}(N_1 \cap N_2)$ by appropriate homeomorphism. Let $q:p_1^{-1}(N_1)\cup p_2^{-1}(N_2) \to p_1^{-1}(N_1)\cup p_2^{-1}(N_2)/\sim $ be the required quotient map and $j: p_1^{-1}(N_1)\to p_1^{-1}(N_1)\cup p_2^{-1}(N_2)$ be the inclusion map. Now by \textit{Case 1}, we can construct suitable non-simple closed curves $\tau$ on $N_1$ which lift simply to $\tilde{\tau}$ by $p_1$ on $\tilde{N_1}$. Hence the curves $i \circ \tau$ on $N_1 \cup N_2$ lifts to $q \circ j \circ \tilde{\tau}$ on $S_{0,k}^{[n]}$ by $p$ where $i:N_1 \to N_1 \cup N_2$ is an inclusion map. We observe that the chosen curve $\tau$ on $N_1$ is essential, therefore so is $\tilde{\tau}$ by Lemma \ref{Essential closed curve}. Hence image of $\tilde{\tau}$ under $q$ maps as an identity function no matter how $p_2^{-1}(N_1 \cap N_2)$ and $p_1^{-1}(N_1 \cap N_2)$  are glued together to get $ p_1^{-1}(N_1)\cup p_2^{-1}(N_2)/\sim $. Also $\tilde{\tau}$ and $j$ are injective. Therefore, $q \circ j \circ \tilde{\tau}$ on $S_{0,k}^{[n]}$ is simple lift of $i \circ \tau$ by $p$. This completes the proof for surfaces of genus $0$ with $k$ boundary components for $k\geq 3$.

\subsection{Surface of genus $g=1$ and connected boundary}\label{Section Surface of genus g=1 and connected boundary}
We denote the surface with genus, $g=1$ and boundary component, $k=1$ by $S_{1,1}$. Such a surface is a deformation retract to $S^1 \vee S^1$ so the fundamental group is a free group with two generators $a$ and $b$ as depicted in Figure \ref{Generators of S in case g=1, k=1}.

\begin{figure}[ht]
    \centering
      \includegraphics[width=4cm, height=5cm]{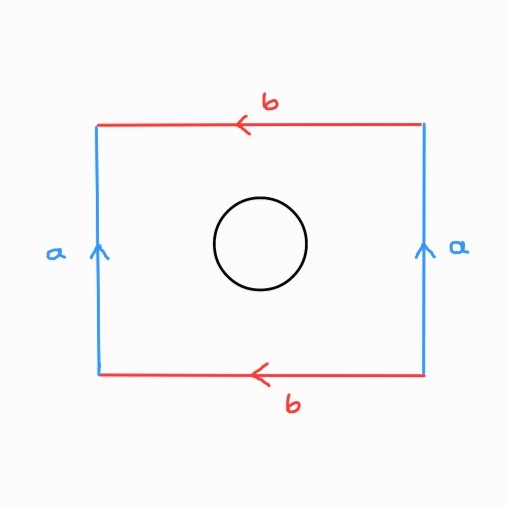}
  \includegraphics[width=5cm, height=5cm]{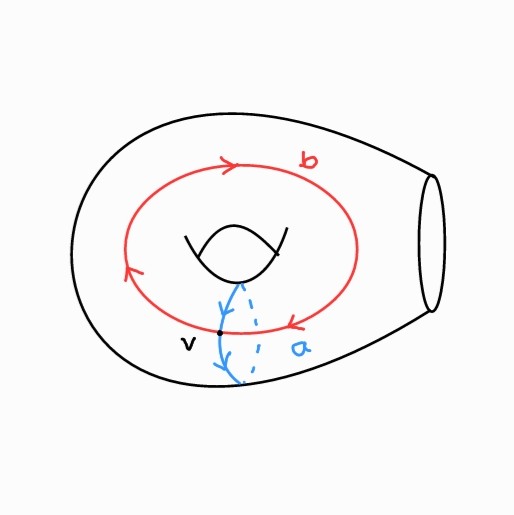}
  \includegraphics[width=5cm, height=5cm]{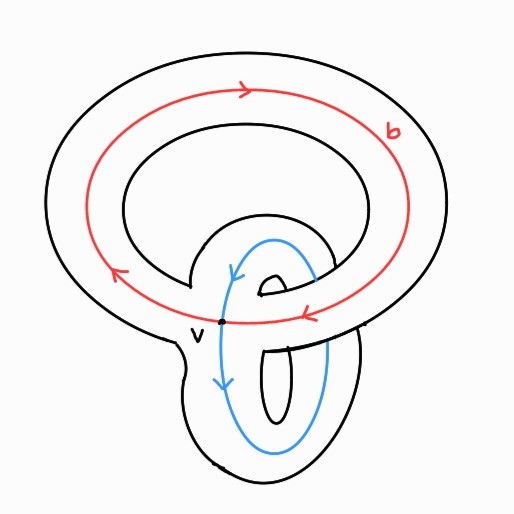}
    \caption{Generators of $S_{1,1}$.}
  \label{Generators of S in case g=1, k=1} 
\end{figure}

We choose orientations for $a$ and $b$ so that $S_{1,1}$ becomes a directed ribbon graph with one vertex and $2$ directed edges. The number of boundary components, $\tilde{k}$ of $n$-sheeted covering $S_{1,1}^{[n]}$ satisfy the inequality (see \cite[Section 1]{massey1974finite}):
\[1\leq \tilde{k}\leq n, \qquad \tilde{k}\equiv n\pmod 2.\]

Also for any integer $q$ satisfying this condition, we construct a specific $n$-sheeted covering $p: ({S_{1,1}^{[n]})_q} \to S_{1,1}$ described in \cite[Section 2]{massey1974finite} which we recall briefly. We  draw $({S_{1,1}^{[n]})_q}$ as a directed ribbon graph with $n$ vertices and $2n$ directed edges, $n$ with $a$ labels and $n$ with $b$ labels and these coverings satisfy the following conditions:
\begin{itemize}
    \item The inverse image $p^{-1}(b)$ is connected.
    \item The inverse image $p^{-1}(a)$ has $q+\frac{1}{2}(n-q)$ components; $q$ of them are mapped homeomorphically onto $a$ by $p$ and each of the remaining $\frac{1}{2}(n-q)$ components consists of a $2$-sheeted covering of $a$. Also the intersection with $p^{-1}(b)$ of such a $2$-sheeted covering consists of two successive vertices of the circle $p^{-1}(b)$.
\end{itemize}
Let, $\eta^k$ be the closed curve on $S_{1,1}$ given by the homotopy class $a^2b^{k}$ where $k\in \mathbb{N}_{\geq 2}$. We observe that $i(a^2b^k,a^2b^k)=k-1$ (see, \cite{Chas2010}, \cite{chemotti2004intersection}) for all $k\in \mathbb{N}$ and therefore $\{\eta^k\}$ is a collection of non-simple closed curves on $S_{1,1}$ for all $k\geq 2$ (e.g. Figure \ref{Representative of eta_2 eta_1}). Since $q\geq 1$, for any $q$ satisfying the condition, $p^{-1}(a)$ has at least one component mapped homeomorphically onto $a$ by the construction of the covering space. Therefore, a lift of $\eta^n$ is given by two labeled $a$ edges followed by $n$ consecutive labeled $b$ edges (see Figure \ref{3 sheeted Covering space of S 1 1}). Consequently, this collection of non-simple closed curves lifts simply on $n$-sheeted covering $S_{1,1}^{[n]}$.
\begin{figure}[ht]
    \centering
   \subfigure[A minimal position representative of $\eta^2\equiv a^2b^2$.]{\includegraphics[width=6cm, height=4cm]{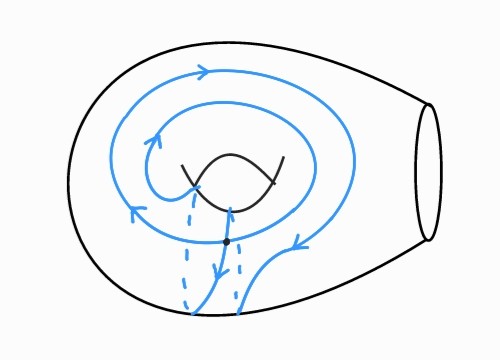}} \hfill 
   \subfigure[A minimal position representative of $\eta^3 \equiv a^2b^3$.]{\includegraphics[width=6cm, height=4cm]{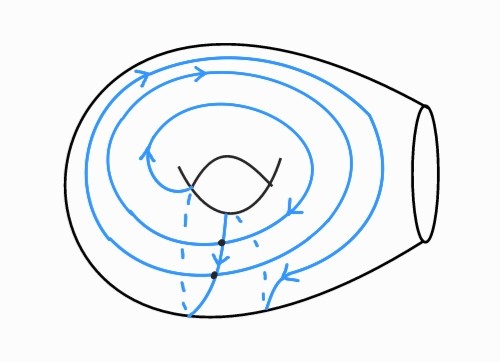}} 
    \caption{Non-simple closed curve $\eta^k$ on $S_{1,1}$.}
  \label{Representative of eta_2 eta_1} 
\end{figure}



\begin{figure}[h]
\centering
        \subfigure[Covering space of $S_{1,1}$ for $q=3$ and a simple lift of $ a^2b^3$ in this covering.]{{\includegraphics[width=6cm, height=5cm]{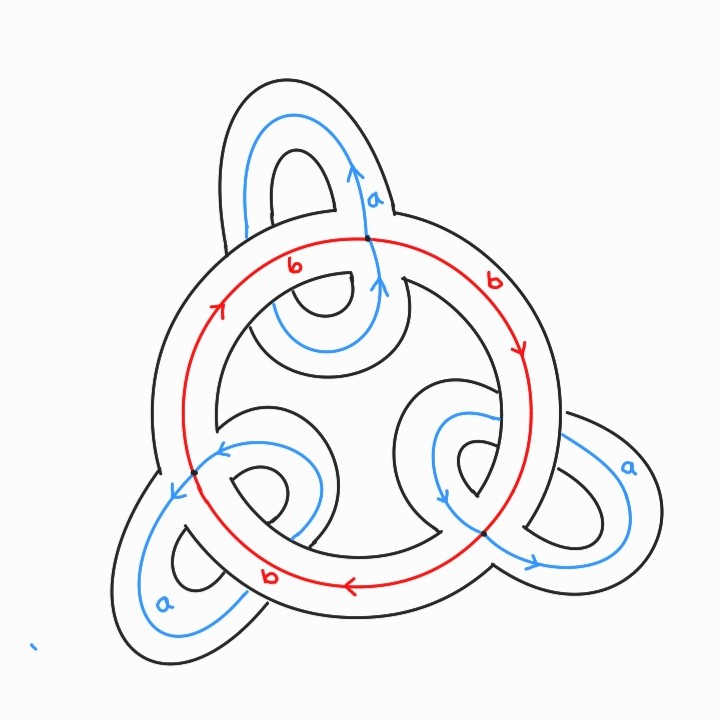}} \hspace{2cm} 
   {\includegraphics[width=6cm, height=5cm]{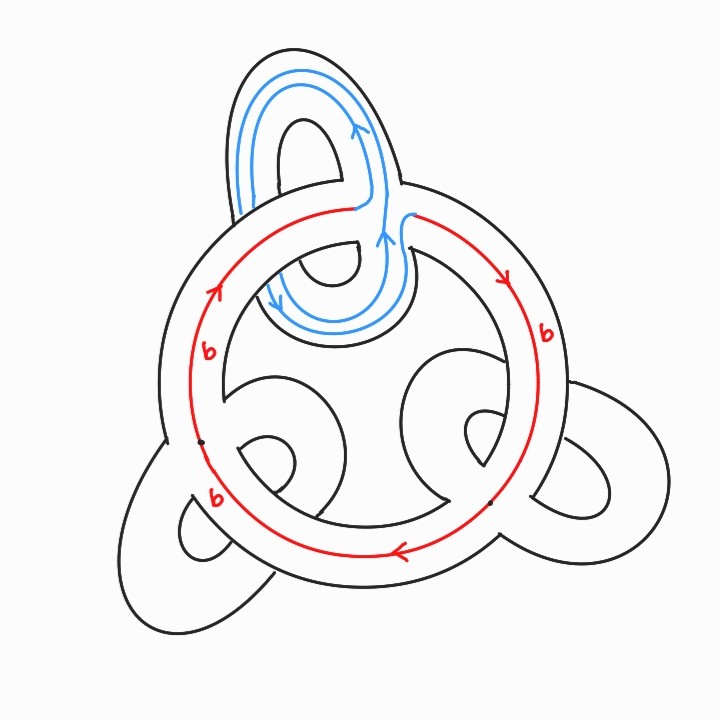}}}
 \hfill
 \end{figure}
 
 \begin{figure}[h]
 \centering
 \subfigure[Covering space of $S_{1,1}$ in case of $q=1$ and a simple lift of $a^2b^3$ in this covering.] 
{{\includegraphics[width=6cm, height=5cm]{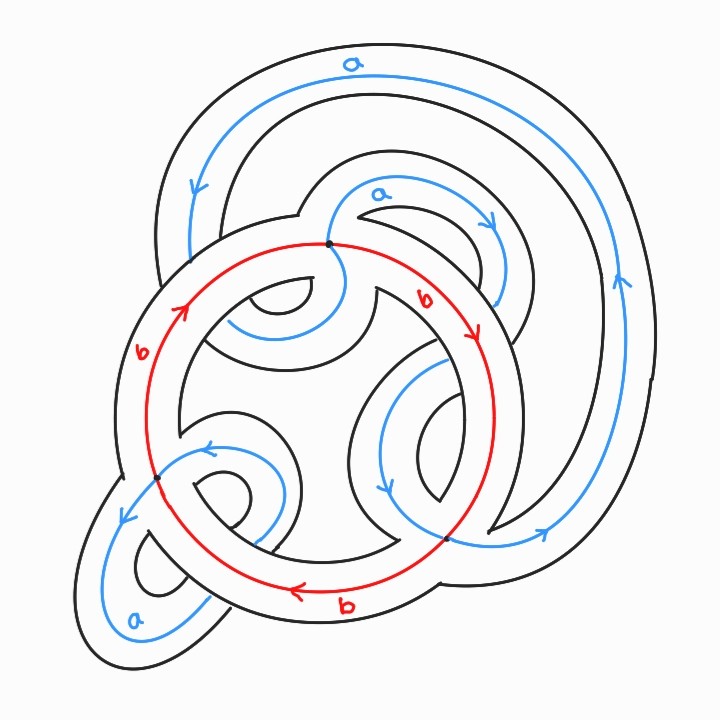}}\hspace{2cm} 
{\includegraphics[width=6cm, height=5cm]{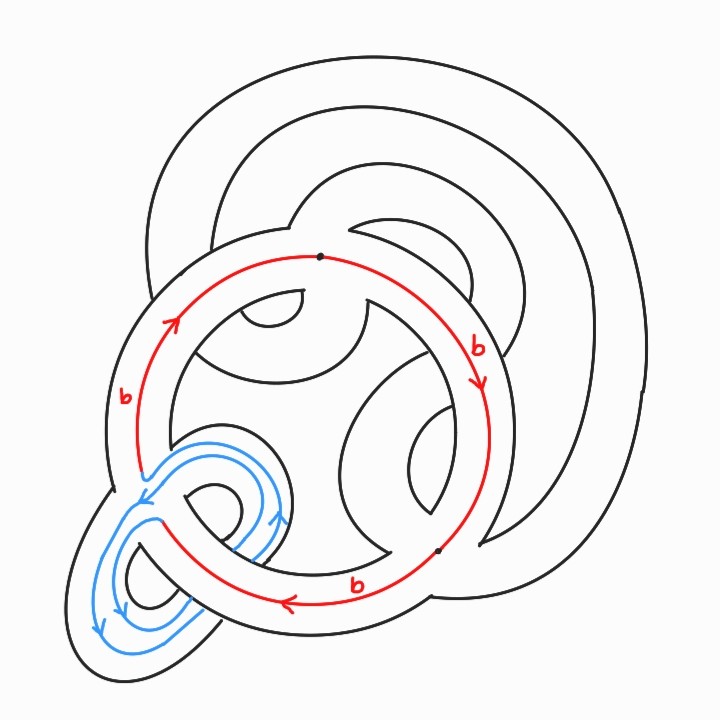}}} \hfill
\caption{}
\label{3 sheeted Covering space of S 1 1}
\end{figure}

\begin{remark}

There are curves with any number of self-intersections
that lift simply in coverings of degree 2 (see, \cite{MR4056691}). The above construction motivates us to give an explicit example of this fact. Let us denote the collection of closed curves $\sigma^n $ given by the homotopy class $a^nb^2$, then $\sigma^n$ is non-simple for all $n\geq 2$ as $i(\sigma^n,\sigma^n)=n-1$. Therefore for any self-intersection number $m$, we choose non-simple closed curve $\sigma^{m+1}$ on $S_{1,1}$. A lift of $\sigma^{m+1}$ is given by $m+1$ labeled $a$ edges followed by $2$ labeled $b$ edges (e.g., Figure \ref{2 sheeted covering space of S 1 1} for $m=2$). 
This gives an explicit example of the fact: $i(\alpha,\alpha) $ is not bounded by any function of $d(\alpha ) $ alone, where $\alpha$ is a free homotopy class of closed curve on an oriented surface $S$.
\end{remark}
\begin{figure}[H]
    \centering
   \subfigure[Two sheeted Covering space of $S_{1,1}$ (here $q=2$).]{\includegraphics[width=6cm, height=5cm]{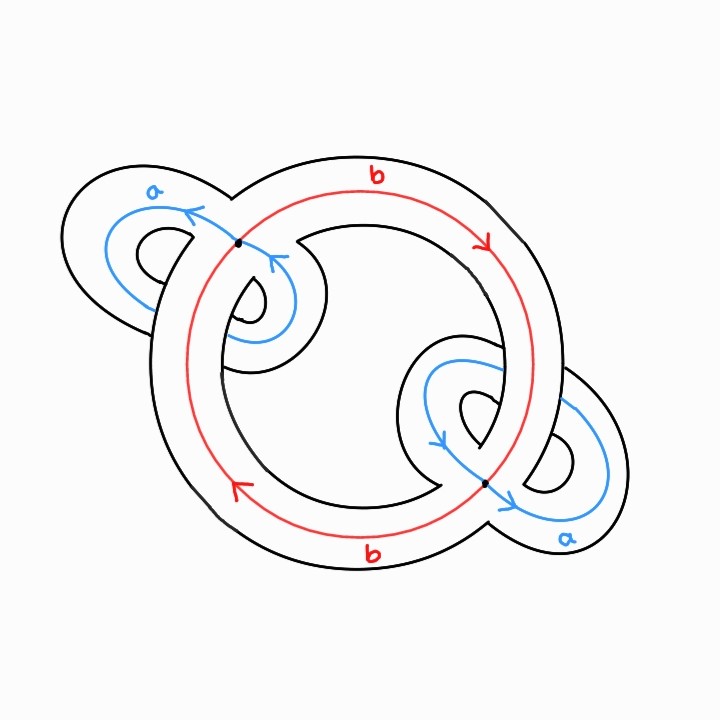 }} \hfill 
   \subfigure[Simple lift of curve $a^3b^2$ in this covering.]{\includegraphics[width=6cm, height=5cm]{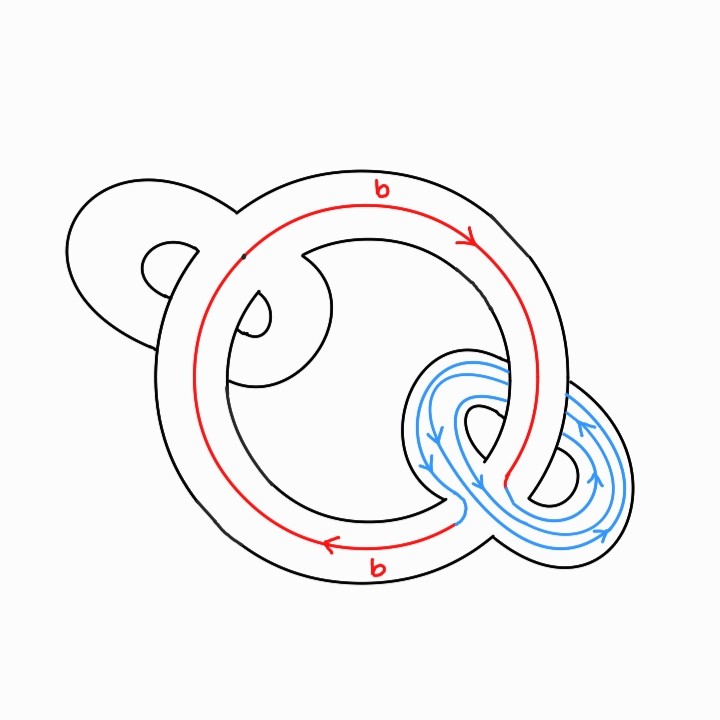}} 
    \caption{}
  \label{2 sheeted covering space of S 1 1} 
\end{figure}
\subsection{Surface of genus $g\geq 2$ and connected boundary}\label{Surface with genus greater equal to 2 and connected boundary}
Let $S_{g,1}$ be a surface with genus $g$ ($\geq 2$) and one boundary component. In this case, the genus $\tilde{g}$ of the $n$-sheeted covering of $S_{g,1}^{[n]}$ satisfies the following inequalities (see \cite[Section 3]{massey1974finite}):
\[ng-n+1 \leq \tilde{g} \leq n(g-1) + \frac{n+1}{2}.\]
Also for any integer $\tilde{g}$ satisfying this inequality, we follow the same construction as described in \cite[Section 3]{massey1974finite} to find $n$-sheeted covering space $S^{[n]}_{g,1}$: we write $S_{g,1}$ as boundary connected sum (see \cite{Algebric_topology_massey_MR211390} for definition of boundary connected sum) of a surface $M_1$ of genus $g-1$ and a surface $M_2$ of genus $1$, each having connected boundary (e.g. Figure \ref{Boundary connected sum of M_1 and M_2}). Let $\tilde{M_2}$ consist of $n$ copies of $M_2$ and $p_2: \tilde{M_2}\to M_2$ denote the map which maps each copy homeomorphically on $M_2$. Let, $p_1:\tilde{M_1}\to M_1$ be a $n$-sheeted covering of $M_1$. Then  a $n$-sheeted covering $p: S_{g,1}^{[n]}\to S_{g,1}$ is constructed by pasting $\tilde{M_1}$ and $\tilde{M_2}$ along $p_1^{-1}(M_1\cap M_2)$ and $p_2^{-1}(M_1\cap M_2)$ by appropriate homeomorphism. 
\begin{figure}[ht]
    \centering
     \includegraphics[width=10cm, height=3cm]{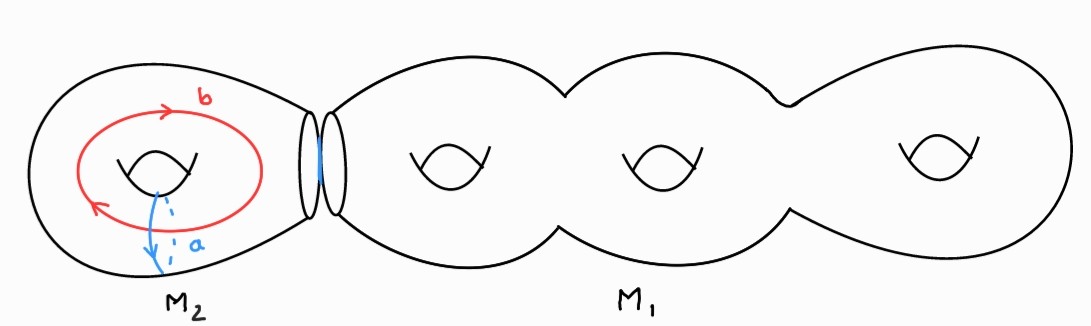}
    \caption{Decomposition of $S_{4,1}$ in boundary connected sum of $M_1$ and $M_2$. }
    \label{Boundary connected sum of M_1 and M_2}
\end{figure}

Now by the construction of the covering space, $S^{[n]}_{g,1}$ contains subsurface which is a $n$-sheeted covering $\tilde{M_1}$ of $M_1$. Let $M_{1}'$ and $M_{1}''$ are boundary connected sum decomposition of $M_1$ where $M_{1}'$ is of genus $g-2$ and $M_{2}''$ is of genus 1, each having connected boundary. Then $\tilde{M_1}$ contains subsurface which is $n$-sheeted covering of $M_{1}'$. Inductively after finitely many steps of boundary connected sum decomposition, $S^{[n]}_{g,1}$ contains a subsurface say $F$ which is $n$-sheeted covering of a surface of genus $1$ and connected boundary and identified with $S_{1,1}$. Let $p':F \to S_{1,1}$ be the corresponding covering map. Now using Section $\ref{Section Surface of genus g=1 and connected boundary}$ we can construct non-simple closed curves $\eta$ on $S_{1,1}$ which lift simply on that subsurface $F$ by $p'$. We note that these chosen curves $\eta$ are essential, so their lift. Therefore, no matter how glued as boundary connected sum to construct $p$, the curves $i \circ \eta $ lift simply by $p$ on $S_{g,1}$ where $i:S_{1,1}\to S_{g,1}$ is the inclusion map.

\subsection{Surface of genus $g\geq 1$ and boundary components $k=2$.}
Let $S_{g,2}$ be a surface with genus $g$ and $2$ boundary components. In this case, genus $\tilde{g}$ 
 of $n$-sheeted covering of $S_{g,2}$ satisfy the following relation (see \cite[Section 5]{massey1974finite}):
\[ng \geq \tilde{g}\geq ng-n+1.\]
\begin{figure}[ht]
    \centering
     \includegraphics[width=12cm, height=3.5cm]{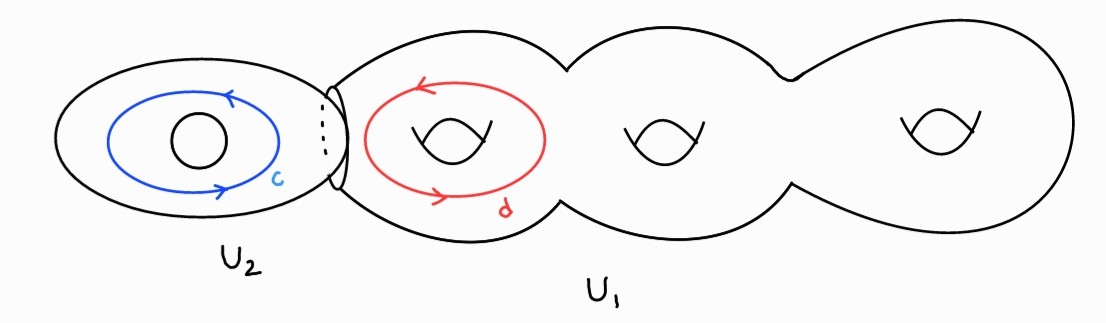}
    \caption{Decomposition of $S_{3,2}$ in boundary connected sum of $U_1$ and annulus $U_2$.}
    \label{Boundary connected sum of U_1 and U_2}
\end{figure}

Also for any given integer $\tilde{g}$ in this range we can construct covering using the method given in \cite[Section 5]{massey1974finite} which we recall below. Let $p:S_{g,2}^{[n]} \to S_{g,2}$ be a covering projection. We represent $S_{g,2}$ as the boundary connected sum of a surface $U_1$ of genus $g$ with connected boundary and an annulus $U_2$, i.e., a surface with $0$ genus and $2$ boundary components (e.g. Figure \ref{Boundary connected sum of U_1 and U_2}). Let $u=ng-\tilde{g}$, then the relation transformed to \[n-1\geq u\geq 0.\] Now $p^{-1}(U_1)$ have connected component as follows:
\begin{enumerate}
    \item[(1)]\namelabel{1} One component is an $(u+1)$-sheeted covering space of $U_1$ of the maximal possible genus $ug+g-u$.
    \item[(2)]\namelabel{2}  In case $n-1 > u$, the remaining components are each $1$-sheeted covering space of $U_1$, each of genus $g$. There are $n-(u+1)$ such components.
    \end{enumerate}
   Also, $p^{-1}(U_2)$ has connected components as follows:
\begin{enumerate}
  \item[(3)]\namelabel{3} One component is an $(n-u)$-sheeted covering space of $U_2$.
    \item[(4)]\namelabel{4} There are $u$ components, each of them is $1$-sheeted covering of $U_2$.
\end{enumerate}
Now we may put everything together to find a connected covering space of $S_{g,2}$ in a certain scheme: $S_{g,2}^{[n]}$ is a boundary connected sum of two pieces:
\begin{itemize}

    \item We attach the surface described in (\ref{1})  and $u$ annuli described in (\ref{4}) as boundary connected sum.
    \item We attach annulus described in (\ref{3}) with $n-u-1$ surfaces of genus $g$ described in (\ref{2}) along the boundary.
\end{itemize}

$\textit{Case 1:}$ Let, $n-1 \geq u \geq 1$. In this case covering spaces $S_{g,2}^{[n]}$ has a subsurface $Y$ which is $u+1 (\geq 2)$-sheeted covering space of $U_1$ by the construction of the covering space. we identify $U_1$ with $S_{g,1}$ and hence by Section \ref{Surface with genus greater equal to 2 and connected boundary}, for different values of $u$, we choose non-simple closed curves $\xi$ on $U_1$ which lifts simply on $Y$ and hence it is easy to show $i \circ \xi$ lifts simply on $S_{g,2}^{[n]}$ where $i: U_1\to S_{g,2} $ is inclusion map.

$\textit{Case 2:}$ Let, $u=0$. In this case, by the construction, we attach $n$-sheeted covering of $U_2$ and $(n-1)$ disjoint copies of $U_1$. Finally, attaching this surface with $U_1$ as boundary connected sum we get the required covering space. We choose the generator say, $c$ of $U_2$, and one generator $d$ of $U_1$ which traverses once around a genus (e.g. Figure \ref{Boundary connected sum of U_1 and U_2}). Without loss of generality, let $c$ and $d$ both be in the anticlockwise direction. Let, $\zeta^k$ be the closed curve on $S_{g,2}$ given by the homotopy class of $c^{k}d$ where $k\in \mathbb{N}$. Now we state the next lemma to ensure the chosen curves $\zeta ^k$ are non-simple for all $k\geq 2$.
\begin{figure}[ht]
    \centering
     \includegraphics[width=12cm, height=3.5cm]{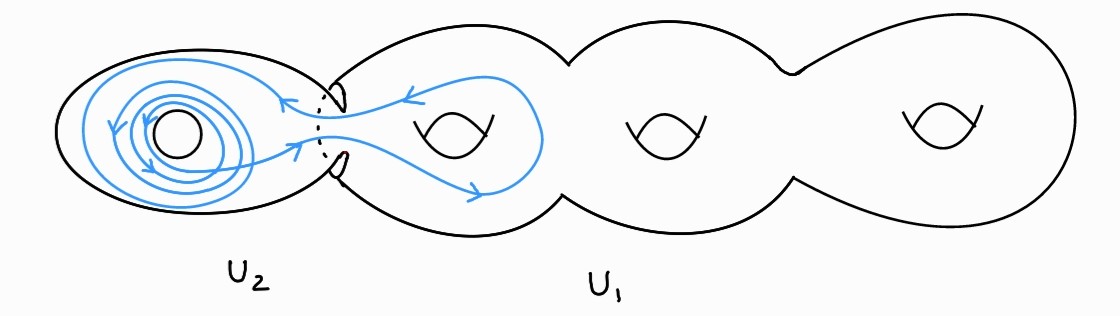}
    \caption{A minimal position representative of $\zeta^4 $ on $S_{3,2}$.}
    \label{minimal position representative of zeta 4}
\end{figure}
\begin{lemma}
    For $k \geq 2$, the curve $\zeta^k$ has $i(\zeta^k,\zeta^k)=k-1$.
\end{lemma}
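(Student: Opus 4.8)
The plan is to sandwich $i(\zeta^k,\zeta^k)$ between $k-1$ and $k-1$, exactly in the style of the preceding lemma for $\eta^k$. For the \emph{upper bound} I would exhibit an explicit representative of the class $c^k d$ with precisely $k-1$ transverse self-intersection points, namely the one drawn in Figure~\ref{minimal position representative of zeta 4} for $k=4$. Here $c$ is the core of the annulus $U_2$ (a simple closed curve) and $d$ is an embedded loop running once around a handle of $U_1$, and since $c$ and $d$ can be realized disjointly (one supported in $U_2$, the other in $U_1$), the only crossings of a taut representative come from the subword $c^k$. Winding a single connected strand $k$ times around the core of $U_2$ and closing it up through the $d$-excursion reproduces the standard picture of the $k$-th power of a simple closed curve, which can be drawn with exactly $k-1$ crossings while $d$ is threaded through the handle so as to contribute none. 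This gives $i(\zeta^k,\zeta^k)\le k-1$.

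For the \emph{lower bound} I would invoke the bigon criterion, as was done for $\eta^k$. Concretely, I would verify that the chosen representative admits neither an immersed monogon nor an immersed bigon: the $k-1$ crossings are precisely the self-crossings of the wound-up core $c^k$, no single crossing bounds a monogon, and no two of them cobound an immersed bigon. Since $\zeta^k=c^k d$ is cyclically reduced and \emph{not} a proper power in the free group $\pi_1(S_{g,2})$ (its cyclic word contains a single $d$, so it cannot have a nontrivial period), a generic representative with no monogons and no bigons is automatically in minimal position. Hence $i(\zeta^k,\zeta^k)\ge k-1$, and combining the two bounds yields $i(\zeta^k,\zeta^k)=k-1$.

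The main obstacle is the lower bound, i.e.\ confirming that the exhibited representative is taut. The upper bound is a direct construction, but ruling out \emph{all} immersed bigons requires understanding how the $k$ winding strands inside $U_2$ interact with the single handle excursion $d$. I would dispatch this by the same figure-inspection argument used for $\eta^k$: all crossings lie among the parallel strands inside the annulus $U_2$, and the cyclic order in which the strand enters and leaves $U_2$ forces every candidate bigon to be blocked by the $d$-excursion, so none of the $k-1$ crossings can be undone by a homotopy. As a sanity check, the count is consistent with $\zeta^1=cd$ being simple ($k-1=0$) and with the classical fact $i(c^k,c^k)=k-1$ for a simple closed curve $c$, the content of the lemma being that appending the factor $d$ neither creates nor removes crossings.
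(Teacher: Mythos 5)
Your proposal is correct and follows essentially the same route as the paper: exhibit the explicit representative of $c^k d$ with $k-1$ crossings (the paper's Figure for $\zeta^4$) for the upper bound, then apply the bigon criterion to certify minimality. The paper's own proof is just a two-sentence version of this (and in fact contains a typo, stating the upper bound as $\leq k$ rather than $\leq k-1$); your added care about $\zeta^k$ not being a proper power, which the bigon criterion for self-intersection genuinely requires, only strengthens the argument.
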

\begin{proof}
    We can choose a representative of $\zeta^k$ which has $k-1$ self-intersection number (e.g. Figure \ref{minimal position representative of zeta 4}). Therefore, $i(\zeta^k, \zeta^k)\leq {k-1}$. Now using bigon criteria we can show that $i(\zeta^k, \zeta^k)=k-1$.
\end{proof} 

For $n$-sheeted covering $S_{g,2}^{[n]}$ we choose non-simple closed curves $\zeta ^n$ on $S_{g,2}$. We recall standard $n$-sheeted covering of annulus $U_2$ and lift of the primitive curve $c^n$ which is simple. Let $p'$ be the point where $c$ and $d$ concatenate and $p_1,p_2,\ldots,p_n$  on $S_{g,2}^{[n]}$ are preimages of $p'$. In this case lift $\zeta^n \equiv c^nd$ homotopic to a lift of $c^n$ joined with $n$ copies of $d$ at $p_1,p_2\ldots, p_n$ on $S_{g,2}^{[n]}$. Consequently, $\zeta^n$ lifts simply on $S_{g,2}^{[n]}$.
For example, in case of $3$-sheeted covering of $S_{3,2}$ we choose non-simple closed curve $\zeta^3$ given by Figure \ref{non-simple closed curve zeta^3} and its simple lift on $S_{3,2}^{[n]}$ is depicted in Figure \ref{Simple lift in case g=3,k=2}.
\begin{figure}[H]
    \centering
     \includegraphics[width=12cm, height=3.5cm]{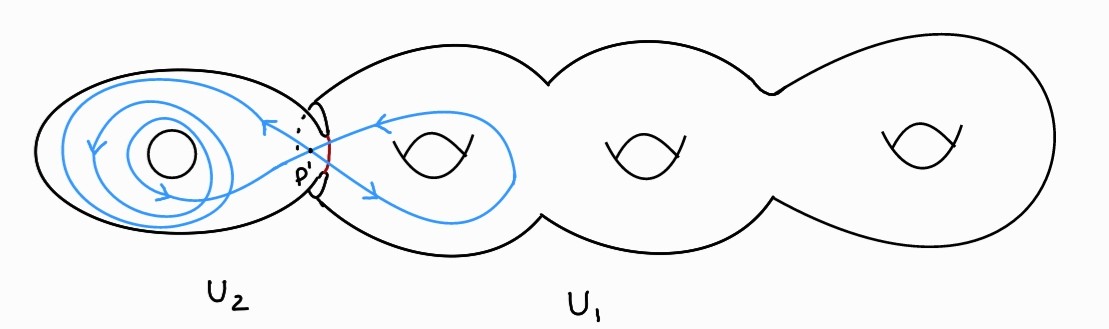}
    \caption{Non-simple closed curve $\zeta^3$ on $S_{3,2}$.}
    \label{non-simple closed curve zeta^3}
\end{figure}

\begin{figure}[H]
    \centering
     \includegraphics[width=12cm, height=8cm]{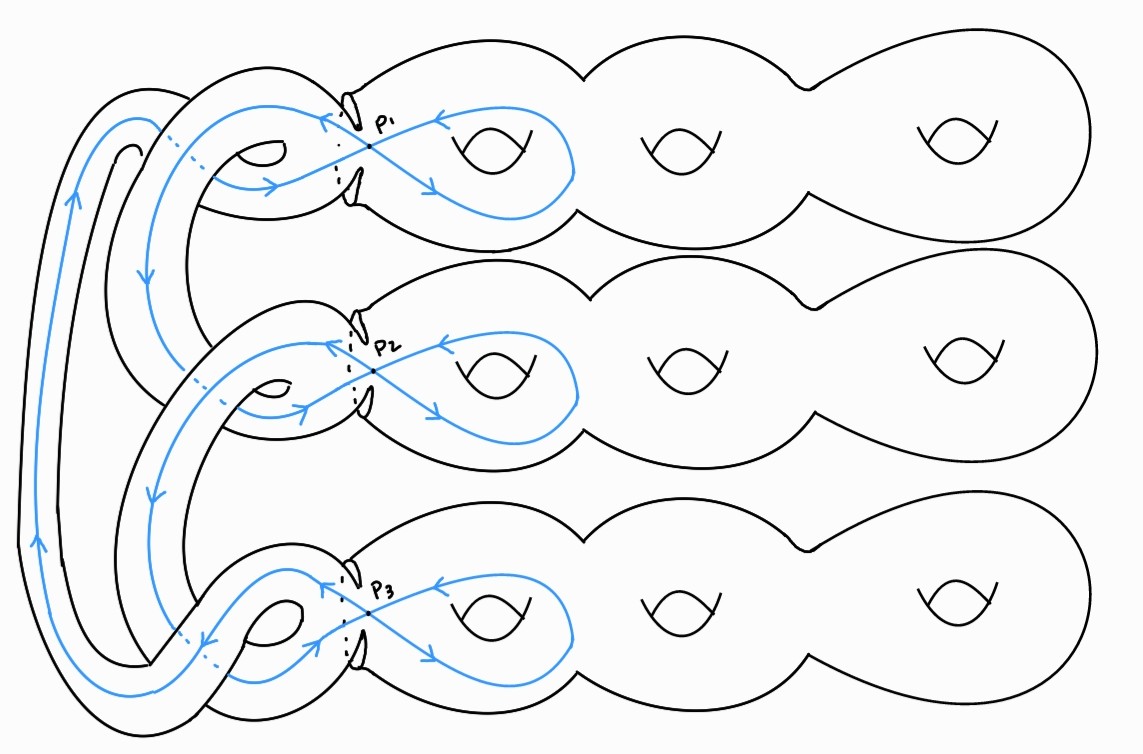}
    \caption{Simple lift of $\zeta^3$ on $S_{3,2}^{[3]}$.}
    \label{Simple lift in case g=3,k=2}
\end{figure}
\subsection{Surface of genus $g\geq 1$ and number of boundary components $k\geq 3$.}\label{Surface of genus greater equal to 1  boundary components greater equal to 3}
Let $S_{g,k}$ be surface having genus $g\geq 1$ and boundary components $k\geq 3$. Any $n$-sheeted covering of $S_{g,k}$ satisfies the inequalities (see \cite[Section 6]{massey1974finite}),
\[ng+\frac{1}{2}(n-1)k-n+1\geq \tilde{g}\geq ng-n+1.\]
Also for any integer $\tilde{g}$, satisfying this condition we can construct covering space having genus $\tilde{g}$, following method in \cite[Section 6]{massey1974finite} which we recall below. Let $p: S_{g,k}^{[n]} \to S_{g,k}$ be a $n$-sheeted covering. We write $S_{g,k}$ as boundary connected sum of $N_1$ of genus $g$ with connected boundary and a surface $N_2$ of genus $0$ and $k$ boundary components (e.g. Figure \ref{Boundary connected sum of N1 and N2.}). We consider three different cases to construct non-simple closed curves on $S_{g,k}$ for different covering spaces.
\begin{figure}[H]
    \centering
     \includegraphics[width=12cm, height=3.5cm]{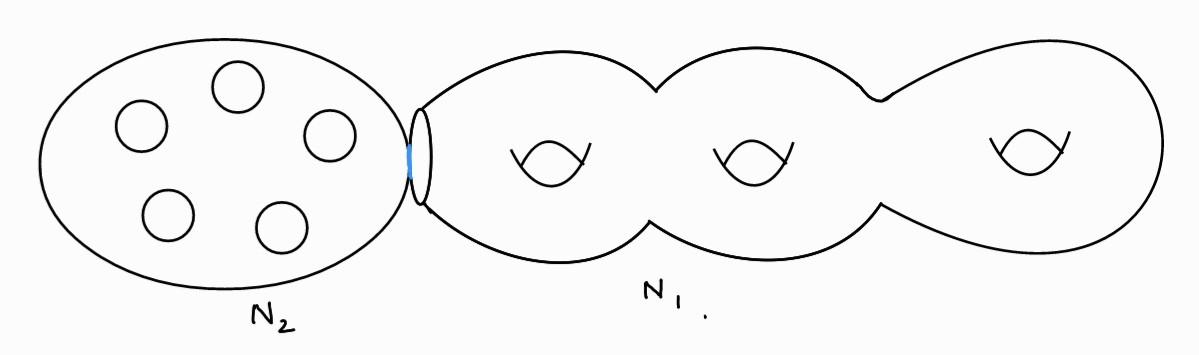}
    \caption{Decomposition of $S_{3,6}$ in boundary connected sum of $N_1$ and $N_2$.}
    \label{Boundary connected sum of N1 and N2.}
\end{figure}

\textit{Case 1:}\label{Case 1 S_g,k} Let, $ng-\frac{1}{2}(n-1)\geq \tilde{g}\geq ng-n+1$. By using Section \ref{Section Surface of genus g=1 and connected boundary} and Section \ref{Surface with genus greater equal to 2 and connected boundary}, we construct $n$-sheeted covering space of $N_1$, $p_1:N_1^{[n]}\to N_1$ such that $N_1^{[n]}$ has genus $\tilde{g}$. Then we construct $S_{g,k}^{[n]}$ as boundary connected sum of $N_1^{[n]}$ and $n$ copies of $N_2$. Again by using Section \ref{Section Surface of genus g=1 and connected boundary} and Section \ref{Surface with genus greater equal to 2 and connected boundary} we can construct non-simple closed curve $\eta$ on $N_1$ which lifts simply on $N_1^{[n]}$ by $p_1$. Due to the construction of the covering, we observe that $p|_{N_1^{[n]}-{B}}=p_1|_{N_1^{[n]}-{B}}$ where $B$ be set of boundary components of $N_1^{[n]}$. Also the non-simple closed curves $\eta$ are essential, hence $\tilde{\eta}$ by Lemma \ref{Essential closed curve}. Therefore the image of $\tilde{\eta}$ does not depend on how the boundary of $N_1^{[n]}$ is glued with $n$ copies of $N_2$. Consequently, $i \circ \eta$ lift simply on $S_{g,k}^{[n]}$ by $p$ where $i:N_1 \to S_{g,k} $ is inclusion map.

\textit{Case 2:} Let, $ng+\frac{1}{2}(n-1)k-n+1 \geq \tilde{g}\geq ng$. Using Section \ref{Surface of genus g=0 and boundary components k greater equal to 3} we construct $n$-sheeted covering $p_2: N_2^{[n]}\to N_2$ such that $N_2^{[n]}$ has the genus $\tilde{g}-ng$. Then we construct $S_{g,k}^{[n]}$ as boundary connected sum of $N_2^{[n]}$ and $n$ copies of $N_1$. By using Section \ref{Surface of genus g=0 and boundary components k greater equal to 3} we can construct non-simple closed curves $\gamma$ on $N_2$ which lifts simply on $N_2^{[n]}$. Again by similar argument as previous case, $i\circ \gamma$ lifts simply on $S_{g,k}^{[n]}$ by $p$ where $i:N_2 \to S_{g,k} $ is inclusion map.

\textit{Case 3:} Let, $ng >\tilde{g}>ng-\frac{1}{2}(n-1)$. If we denote $l=ng-\tilde{g}+1$, the condition simplifies to $1<l<\frac{n+1}{2}$. Now we construct $p: S_{g,k}^{[n]} \to S_{g,k}$ such that $p^{-1}(N_1)$ consists of $n-l+1$ components as follows:
\begin{itemize}
    \item The $(n-l)$ components of $p^{-1}(N_1)$ are each $1$-sheeted covering of $N_1$.
    \item The remaining $1$ component of $p^{-1}(N_1)$ is an $l$-sheeted covering of $N_1$ of minimum possible genus , i.e., genus $l(g-1)+1$. 
\end{itemize}
Also, we construct $p: S_{g,k}^{[n]} \to S_{g,k}$ such that each component of $p^{-1}(N_2)$ has genus $0$. Now $S_{g,k}^{[n]}$ is described as boundary connected sum of two pieces as follows:
\begin{enumerate}
    \item The first piece consists of $l$-sheeted covering of $N_1$ (of genus $l(g-1)+1$ ) to which $l-1$ copies of $N_2$ will be attached as boundary connected sum.
    \item The second piece consists of $(n-l+1)$-sheeted covering space of $N_2$ of genus $0$ to which $(n-l)$ copies of $N_1$ has been attached as boundary connected sum.
\end{enumerate}
Hence, in this case, $S_{g,k}^{[n]}$ contains a subsurface which contains an $l$-sheeted covering of $N_1$ where $l>1$ and we described how to construct non-simple closed curves on $N_1$ which lifts simply in Section \ref{Surface with genus greater equal to 2 and connected boundary} (for genus of $N_1$ greater equal to $2$) and Section \ref{Section Surface of genus g=1 and connected boundary} (for genus of $N_1$ equal to $1$). By similar arguments as \textit{Case 1}(Section \ref{Case 1 S_g,k}), these curves lift simply on $S_{g,k}^{[n]}$ by $p$.
\section{Closed Surfaces}\label{Closed orientable surfaces}
In this section, we prove Theorem \ref{Main theorem} for closed surfaces.
We start with the following lemma.
\begin{lemma}\label{Theorem 1}
    Assume $P:\tilde{X} \to X$ is a covering map with $\tilde{X}, X$ both paths connected. Assume $A$ is path connected subset of $X$ so that $i_*: \pi_1(A,a)\to \pi_1(X,a)$ is onto for any $a\in A$ where $i$ is an inclusion map. Then $P^{-1}(A)$ is path connected.
\end{lemma}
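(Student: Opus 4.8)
The plan is to prove directly that any two points of $P^{-1}(A)$ can be joined by a path lying inside $P^{-1}(A)$. The one structural fact I will use repeatedly is elementary: if $\beta$ is a path in $X$ whose image is contained in $A$, then any lift $\tilde{\beta}$ of $\beta$ to $\tilde{X}$ automatically has image in $P^{-1}(A)$, since $P(\tilde{\beta}(t)) = \beta(t) \in A$ for every $t$. Thus the problem reduces to producing, between the projections of the two chosen points, a path that lies in $A$ and whose lift connects the two points.

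First I would fix two points $\tilde{x}_0, \tilde{x}_1 \in P^{-1}(A)$ and set $x_0 = P(\tilde{x}_0)$, $x_1 = P(\tilde{x}_1)$, both in $A$. Since $\tilde{X}$ is path connected there is a path $\tilde{\gamma}$ in $\tilde{X}$ from $\tilde{x}_0$ to $\tilde{x}_1$; its projection $\gamma = P \circ \tilde{\gamma}$ is a path in $X$ from $x_0$ to $x_1$, but there is no reason for $\gamma$ to lie in $A$. Since $A$ is path connected, I can also choose a path $\delta$ in $A$ from $x_0$ to $x_1$. The concatenation $\gamma \cdot \bar{\delta}$ is then a loop in $X$ based at $x_0$.

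The heart of the argument, and the step I expect to be the main obstacle, is to rectify $\gamma$ into $A$ using the surjectivity hypothesis. Because $i_* : \pi_1(A, x_0) \to \pi_1(X, x_0)$ is onto, the class $[\gamma \cdot \bar{\delta}] \in \pi_1(X, x_0)$ is represented by a loop $\alpha$ lying in $A$; that is, $\alpha$ is homotopic to $\gamma \cdot \bar{\delta}$ rel basepoint in $X$. Rearranging, $\gamma$ is homotopic rel endpoints in $X$ to $\alpha \cdot \delta$, and the latter path lies entirely in $A$. Now the homotopy lifting property for the covering $P$ guarantees that the lift of $\alpha \cdot \delta$ beginning at $\tilde{x}_0$ ends at the same point as the lift $\tilde{\gamma}$ of $\gamma$ beginning at $\tilde{x}_0$, namely at $\tilde{x}_1$. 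By the structural fact above, this lift of $\alpha \cdot \delta$ lies in $P^{-1}(A)$, so it is the desired path in $P^{-1}(A)$ from $\tilde{x}_0$ to $\tilde{x}_1$. As $\tilde{x}_0, \tilde{x}_1$ were arbitrary, $P^{-1}(A)$ is path connected.

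An alternative, more structural route is to note that the restriction $P|_{P^{-1}(A)} : P^{-1}(A) \to A$ is itself a covering map with the same fiber $P^{-1}(x_0)$ as $P$. Connectedness of such a covering is equivalent to transitivity of the monodromy action of $\pi_1(A, x_0)$ on the fiber; since $P$ is a connected covering, $\pi_1(X, x_0)$ acts transitively on $P^{-1}(x_0)$, and surjectivity of $i_*$ transports this transitivity to $\pi_1(A, x_0)$. This phrasing packages the same homotopy-lifting input, but I would present the explicit path construction above as the primary proof since it is self-contained.
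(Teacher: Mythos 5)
Your proof is correct and follows essentially the same route as the paper's: both arguments combine path connectedness of $A$ and $\tilde{X}$, surjectivity of $i_*$ to replace a loop in $X$ by a homotopic loop in $A$, and the homotopy lifting property to conclude that the lift of a path lying in $A$ connects the two given points inside $P^{-1}(A)$. The only difference is bookkeeping: the paper first lifts the path in $A$ and then corrects the fiber discrepancy by a lifted loop, whereas you first straighten the projected path into $A$ (writing $\gamma \simeq \alpha \cdot \delta$ rel endpoints) and then lift once.
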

\begin{proof} Let $p_1,p_2$ be two points in $P^{-1}(A)$, so, $P(p_1)$ and $P(p_2)$ are in $A$. Since, $A$ is path connected, there is a path say $g:[0,1]\to A$ such that $g(0)=P(p_1)$ and $g(1)=P(p_2)$. Using path lifting property, $\tilde{g}$ is lift of $g$ starting at $p_1$ and ending at some point in the fiber $P^{-1}(g(1))$, say $p_3$. Again, path connectedness of $\tilde{X}$  implies that there is a path between $p_2$ and $p_3$ say $f$. Therefore,, $P\circ f$ is a loop based at $P(p_2)(=P(p_3))$. Consequently, $P \circ f \in \pi_1(X,P(p_2))$. The surjectivity of $i_*$ shows there exists some $h \in \pi_1(A, P(p_3))$ such that $i \circ h$ is homotopic to $P\circ f $. Using the homotopy lifting property, $\widetilde{i \circ h}$ is a path in $\tilde{X}$ that begins at $p_3$ and ends at $p_2$. Hence $\tilde{h}$ is a path in $P^{-1}(A)$ starting at $p_3$ and ends at $p_2$. Consequently, $\tilde{g}\cdot \tilde{h}$ is a required path in $P^{-1}(A)$ joining $p_1$ and $p_2$. 
\end{proof}

\begin{proof}[\textbf{Proof of Theorem \ref{Main theorem} for closed surface $S_g$, $g\geq 2$}]
    We recall any covering of an oriented surface is oriented and any finite sheeted covering of a compact surface is again compact. Also, we know that two oriented closed surfaces $X$ and $Z$ are homeomorphic if and only if their Euler characteristics are equal. Now using Euler characteristics arguments, for a given $n$-sheeted covering $S_g^{[n]}$ of $S_g$ is homeomorphic to $S_{1+ng-n}$, i.e, they are topologically equivalent.
The fundamental group of $S_g$ is given by, \[\pi(S_g)= \big<c_1,d_1,c_2,d_2,\ldots c_g,d_g: [c_1,d_1][c_2,d_2]\ldots[c_g,d_g]=1\big>.\]

\begin{figure}[ht]
    \centering
  \includegraphics[width=6cm, height=6cm]{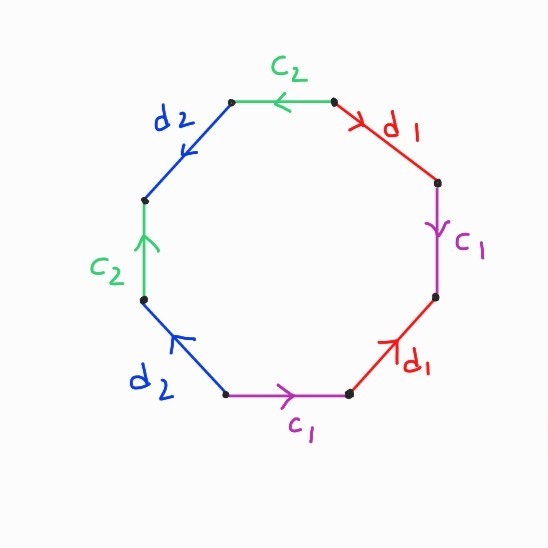}\hfill
   \includegraphics[width=6cm, height=6cm]{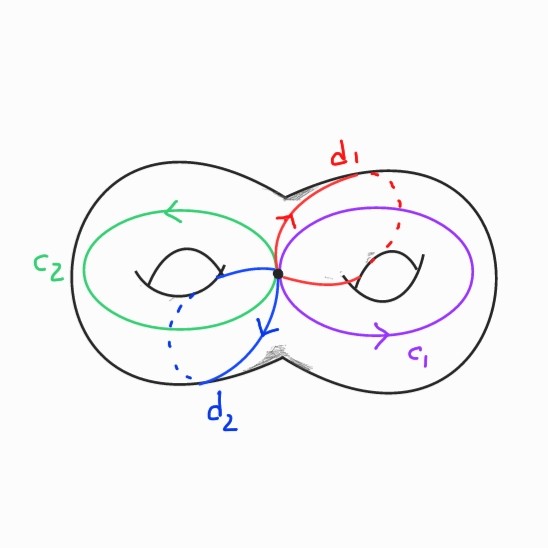}
    \caption{A standard set of generators of $S_2$.}
    \label{A standard set of generators of S2}
\end{figure}
\begin{figure}[ht]
    \centering
   \includegraphics[width=6cm, height=4cm]{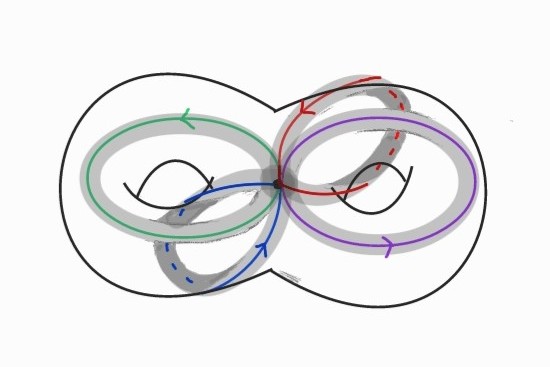}\hfill
  \includegraphics[width=6cm, height=4cm]{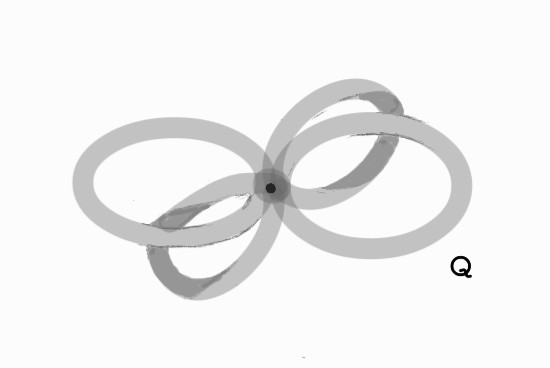} 
    \caption{Regular neighbourhood of generators of $S_2$.}
    \label{Regular neighbourhood of generators}
\end{figure}
Let us consider a regular neighborhood of the union of the generators $c_1, d_1, c_2, d_2, \ldots,$ $ c_g,d_g$ on $S_g$, which is a compact surface of genus $g$ and connected boundary, namely $Q$. For $g=2$ standard set of generators and this regular neighbourhood is shown in Figure \ref{Regular neighbourhood of generators} and Figure \ref{A standard set of generators of S2}. The fundamental group of $Q$ is a free group $F_{2g}$ with $2g$ generators. Without loss of generality, the generators of $Q$ are considered along generators of $S_{g}$. 
Let $P_n: S_{1+ng-n}\rightarrow S_g$ is any $n$-sheeted covering. Also, the induced homomorphism $i_*: \pi(Q)\to \pi(S_g)$ is onto where $i$ is an inclusion map from $Q \to S_g$. Then using Lemma \ref{Theorem 1}, $P_n^{-1}(Q)$ is path connected, therefore $P_n\vert_{P_n^{-1}(Q)}: P_n^{-1}(Q)\to Q $ is a $n$-sheeted covering of $Q$. We opt Section \ref{Surface with genus greater equal to 2 and connected boundary} to choose suitable non-simple closed curves on $Q$ which lifts simply on $P_n^{-1}(Q)$ by $P_n\vert_{P_n^{-1}(Q)}$. Consequently, these chosen non-simple closed curves lift simply on $S_{1+ng-n}$ by $P_n$. This completes the proof.
\end{proof}

\bibliographystyle{plain}
\bibliography{bibfile}

    
    



\end{document}